\documentclass[11pt, twoside]{amsart}
\usepackage{amsmath,mathtools}
\usepackage{amsmath, amsthm, amssymb, amsfonts, enumerate}
\usepackage[colorlinks=true,linkcolor=blue,urlcolor=blue,citecolor=blue]{hyperref}
\usepackage{dsfont}
\usepackage{color}
\usepackage{todonotes}
\usepackage{epstopdf}
\usepackage{bbm}

\usepackage[utf8]{inputenc}
\usepackage{bm}
\usepackage{amsfonts}
\usepackage{amsfonts}
\usepackage{textcomp}
\usepackage{amssymb}
\usepackage{float}
\usepackage{tikz}
\usepackage{epsfig}
\usepackage{amsmath}
\usepackage[english]{babel}
\usepackage{a4}
\usepackage{enumerate}
\usepackage{tcolorbox}
\usepackage{soul}
\usepackage{geometry}
\geometry{left=2.5cm, right=2.5cm, vmargin=3cm}
\newcommand{\stkout}[1]{\ifmmode\text{\sout{\ensuremath{#1}}}\else\sout{#1}\fi}
\usepackage{csquotes}

\newtheorem{theorem}{Theorem}[section]

\newtheorem{remark}[theorem]{Remark}
\newtheorem{assumption}[theorem]{Assumption}
\newtheorem{lemma}[theorem]{Lemma}

\newtheorem{definition}[theorem]{Definition}

\newtheorem{example}[theorem]{Example}

\def \E{\mathsf{E}}

\def \P{\mathsf{P}}
\def \R{\mathbb{R}}

\def\d{\mathrm{d}}

\definecolor{red}{rgb}{1.0,0.0,0.0}

\definecolor{blu}{rgb}{0.0,0.0,1.0}

\definecolor{gre}{rgb}{0.03,0.50,0.03}

\usepackage[T1]{fontenc}

\usepackage{wrapfig}
\usepackage{epsf}

\title[Sensitivity  of  functionals of McKean-Vlasov SDEs]{Sensitivity of  functionals   of  McKean-Vlasov SDE's  with respect to the initial distribution}

\date{\today}
\author[de Feo]{Filippo de Feo}
\address{Filippo de Feo, Department of Economics and Finance, Luiss Guido Carli University, Rome (Italy) and Institut für Mathematik, Technische Universität Berlin, Berlin (Germany)}
\email{\href{mailto:defeo@math.tu-berlin.de}{defeo@math.tu-berlin.de}}

\author[Federico]{Salvatore Federico}
\address{Salvatore Federico, Dipartimento di Matematica, Universit\`a ``Alma Mater'' di Bologna (Italy)}
\email{\href{mailto:s.federico@unibo.it}{s.federico@unibo.it}}

\author[Gozzi]{Fausto Gozzi}
\address{Fausto Gozzi, Department of Economics and Finance, Luiss ``Guido Carli'' University, Rome (Italy)}
\email{\href{mailto:fgozzi@luiss.it}{fgozzi@luiss.it}}

\author[Touzi]{Nizar Touzi}
\address{Nizar Touzi, New York University, Tandon School of Engineering, New York (USA).}
\email{\href{mailto:nizar.touzi@nyu.edu}{nizar.touzi@nyu.edu}}

\numberwithin{equation}{section}


\begin{document}
\maketitle
\textbf{Keywords:} Distributionally robust optimization; tangent process for McKean-Vlasov SDEs; model risk.

\medskip
\textbf{MSC2020:} 60H10;	91G45

\begin{abstract}
We examine the sensitivity at the origin of the distributional robust optimization problem in the context of a model generated by a mean field stochastic differential equation. We adapt the finite dimensional argument developed by Bartl, Drapeau, Obloj \& Wiesel to our framework involving the infinite dimensional gradient of the solution of the mean field SDE with respect to its initial data. We revisit the derivation of this gradient process as previously introduced by Buckdahn, Li \& Peng, and we complement the existing properties so as to satisfy the requirement of our main result. We use the theory developed in the context of a mean-field systemic risk model by  evaluating the sensitivity  with respect to the initial distribution for the variance of the log-monetary reserve of a representative bank.
\end{abstract}

\section{Introduction}

Distributionally robust optimization (DRO) has been very popular in the recent Operations Research literature. The main idea is to formulate the traditional worst case evaluation of some criterion $g(\mu)$ of a model defined by a probability measure $\mu$ outside the restricted framework of parametric models, and to consider instead all possible deviations of the underlying model in the set of probability measures. The discrepancy between such probability measures was evaluated through the Kullback divergence in Lam \cite{lam2016robust}, the total variation distance in Farokhi \cite{farokhi2023distributionally}, a criterion based a cumulative distribution functions in  Bayraktar \& Chen \cite{bayraktar2023nonparametric}, and more recently through the $p-$Wasserstein distance in Esfahani \& Kuhn  \cite{EsfahaniKuhn}  and Blanchet   \& Murthy \cite{blanchet2019quantifying}, see also Neufeld, En,  \& Zhang \cite{neufeld2024robust},
Fuhrmann, Kupper  \& Nendel \cite{fuhrmann_wasserstein_2023}, Blanchet \& Shapiro \cite{blanchet2023statistical}, Nendel \& Sgarabottolo \cite{nendel2022parametric}, Blanchet, Chen \& Zhou \cite{blanchet2022distributionally}, Blanchet,  Kang \& Murthy \cite{blanchet2019robust}, 
Blanchet, Li, Lin \& Zhang \cite{blanchet2024distributionally},
Lanzetti, Bolognani \& Dörfler \cite{lanzetti2022first},
Yue, Kuhn \& Wiesemann  \cite{yue2022linear}.

By restricting to models in the $p-$Wasserstein ball with radius $\delta$ centered at the model of interest, the DRO value function reduces to a scalar function of $\delta$ defined as the worst evaluation of the criterion $g$ in this neighborhood. Our interest here is on the sensitivity at the origin of the DRO as derived in Bartl, Drapeau, Obloj \& Wiesel \cite{bartl2021sensitivity}, see also Bartl \& Wiesel \cite{bartl2023sensitivity} for similar results in the context of the adapted Wasserstein ball, Jiang \& Obloj \cite{jiang2024sensitivity} for the continuous time extension of \cite{bartl2023sensitivity}, and Sauldubois \& Touzi \cite{sauldubois2024first} for the martingale Wasserstein ball with possibly restricted marginals.

In this paper, we consider the case where the criterion $g(\mu):=\phi\,\circ\,(T_\#\mu)$ where $T$ is a transport map of the initial law $\mu$ through the solution of a mean field stochastic differential equation. This setting is motivated by the huge interest in mean field models resulting from interacting population of agents in several application areas. The seminal papers of Lasry \& Lions \cite{LasryLions} and Huang, Malham\'e \& Caines  \cite{HuangMalhameCaines} introducing mean field games initiated a large interest interacting populations control and games problems and their mean field limit, see also Carmona \& Delarue \cite{carmona_delarue}. We are particularly interested in applications to default contagion and systemic risk modeling in financial mathematics, e.g., see Fouque, Carmona \& Sun \cite{carmona_fouque_sun}, Nadtotchiy \& Shkolnikov \cite{NadtochiyShkolnikov}, Djete \& Touzi \cite{DjeteTouzi}, and De Crescenzo, de Feo \& Pham \cite{decrescenzo_defeo_pham}.

The current setting in this paper with criterion involving the law of a mean field SDE is in the general spirit of the static setting considered by Bartl, Drapeau, Obloj \& Wiesel \cite{bartl2021sensitivity}. However, the criterion considered in this paper $g(\mu):=\phi\,\circ\,(T_\#\mu)$ does not fall in the setting considered in there, and involves a nontrivial nonlinearity due to the mean field dependence.
In order
to adapt their derivation approach of the DRO sensitivity at the origin and to characterize it in our dynamic setting, we essentially analyze  the regularity of this transport map.  This requires a careful study of the gradient process of the solution of the mean field stochastic differential equation with respect to its initial condition which was introduced previously by  Buckdahn, Li, Peng \& Rainer  \cite{buckdahn_li_peng_rainer} (see also Chassagneux, Crisan,  \& Delarue  \cite{chassagneux_delarue_crisan}). We revisit the approach of \cite{buckdahn_li_peng_rainer} and complement their result as follows. Under appropriate conditions, this gradient process is the Gateaux derivative of the solution of the mean field SDE, is a uniformly strongly continuous functional of its initial condition and is bounded in the operator norm. The corresponding adjoint process inherits the last properties and is the main ingredient for the expression of the DRO sensitivity at the origin. We use the theory developed in the context of a mean-field systemic risk model \cite[Section 2]{carmona_fouque_sun} by  evaluating the sensitivity  with respect to the initial distribution for the variance of the log-monetary reserve of a representative bank.

The paper is organized as follows. Section \ref{sec:framework} introduces the general framework. Section \ref{sec:main} contains the main results together with the justification of the distributionally robust sensitivity. All arguments related to the differentiability of the solution of the McKean-Vlasov SDE with respect to its initial law are reported in Section \ref{sec:diff}. Finally, we provide in Section \ref{sec:example} an application in the context of the systemic risk model of Carmona, Fouque \& Sun \cite{carmona_fouque_sun}.

\section{The underlying framework}
\label{sec:framework}

\subsection{Basic notations.}\label{subsection:notations} We denote by $\mathbb R^{d \times h}$ the space of $d \times h-$matrices with entries in $\mathbb R$ and  by $|\cdot|$ the Frobenius norm on this space. The transpose of a matrix is denoted by the superscript $^{T}$. 

Given a Polish space $E$, we denote by  $\mathcal{B}(E)$ the Borel $\sigma-$algebra on $E$ and by $\mathcal{P}(E)$ the set of  probability measures on $(E, \mathcal{B}(E)).$  The Dirac's delta  measure concentrated at  $x\in E$ is denoted by  $\delta_x$. We  denote the set of couplings in $\mathcal{P}(\mathbb R^{d } \times \mathbb R^{d })$ with given marginals $\mu, \mu' \in \mathcal{P}(\mathbb R^{d})$ by
$\Pi(\mu,\mu'):=\big\{\pi \in \mathcal{P}(\mathbb R^{d } \times \mathbb R^{d }):  \ \pi(\cdot \times \mathbb R^{d })=\mu,  \ \pi(\mathbb R^{d } \times \cdot)=\mu^{\prime}\big\} .$

Given a  probability space $(\Omega, \mathcal F, \P)$ and a random variable $X \colon \Omega \to \mathbb R^{d \times h}$, we denote by $\P_X$ the law of $X$. For $p>0$ we denote by $L^p(\mathcal F; \mathbb R^{d \times h})$ the Banach space of $p-$integrable random variables $X$ with norm
$\| X\|_{L^p}:=(\E |X|^p)^{1/p}$. We denote $L^p(\mathcal F):=L^p(\mathcal F; \mathbb R)$.
When $p=2$ we denote by $\langle \cdot,\cdot \rangle_{L^2}$ the corresponding inner product on this space. 
A probability space $(\Omega, \mathcal F, \P)$  is said to be atomless if for every $A \in \mathcal F$ such that $\P(A)>0$, there exists  $B \subsetneq A$ such that $\P (B)>0.$ It is well known  \cite{carmona_delarue} that, in this case,  for every $\mu \in \mathcal P(\mathbb R^{d \times h})$ there exists a random variable $X \colon \Omega \to \mathbb R^{d \times h}$ random variable such that $\P_X=\mu.$ 

\subsection{The Wasserstein space.}
We define 
$\mathcal{P}_2(\mathbb R^{d }):=\left\{\mu \in \mathcal{P}(\mathbb R^{d }):  \|\mu \|_2<\infty\right\},$
where 
$\|\mu \|_2:=\left ( \int_{\mathbb R^{d}} | x|^2 \mu(d x) \right)^{1/2}.$   We endow $\mathcal{P}_2(\mathbb R^{d })$ with the $2$-Wasserstein distance 
$$
\mathcal{W}_2\left(\mu, \mu^{\prime}\right):=\inf_{\pi \in \Pi(\mu,\mu')} \left( \int_{\mathbb R^{d } \times\mathbb R^{d }} |x-y|^2 \pi(d x, d y) \right)^{\frac{1}{2}},\quad \mu, \mu^{\prime} \in \mathcal{P}_2(\mathbb R^{d }).
$$
The space $\left(\mathcal{P}_2(\mathbb R^{d }), \mathcal{W}_2\right)$ is a Polish space.  It is well-known \cite{carmona_delarue} that an optimal coupling  always exists, i.e. there exists
 $\pi^* \in \Pi(\mu,\mu')$ such that $\mathcal{W}_2\left(\mu, \mu^{\prime}\right)=\int_{\mathbb R^{d } \times\mathbb R^{d }} |x-y|^2 \pi^*(d x, d y)$.  We recall \cite{carmona_delarue} that the following equality holds:
 \begin{equation}\label{eq:W2(mu,delta0)}
 \|\mu\|_2=\mathcal W_2(\mu,\delta_0),  \quad  \mu \in \mathcal{P}_2(\mathbb R^{d }).
 \end{equation}
 We denote 
$B^2_\delta (\mu):=\left \{ \mu' \in \mathcal{P}_2(\mathbb R^{d }) : \ \  \mathcal{W}_2\left(\mu', \mu\right)  \leq  \delta \right  \}$.

If   $(\Omega, \mathcal F, \P)$ is atomless, we have
$
\mathcal{P}_2(\mathbb R^{d })=\left\{ \P_X: X \in L^2(\mathcal F; \mathbb R^{d } )\right\},
$
that is, given $\mu\in \mathcal{P}_2(\mathbb R^{d })$, we may construct $X\in L^2(\mathcal F;\mathbb R^d)$ such that $\P_{X}=\mu$.
Hence, given $ \mu, \mu' \in \mathcal{P}_2(\mathbb R^{d })$, we have $\Pi(\mu,\mu') \subset \mathcal{P}_2(\mathbb R^{d } \times \mathbb R^{d })$ and
$
\Pi(\mu,\mu')=\Big\{\P_{(X,X')}  : \ X,X' \in  L^2(\mathcal F; \mathbb R^{d } ), \ \mathbb P_X = \mu, \mathbb P_{X'} = \mu' \Big\} .
$
Thus,  for every  $\mu, \mu^{\prime} \in \mathcal{P}_2(\mathbb R^{d })$, we have
$
\mathcal{W}_2\left(\mu,\mu^{\prime}\right):=\inf \big\{  \| X-X'\|_{L^2}: \ X,X' \in L^2(\mathcal F; \mathbb R^{d }),\ X \sim \mu, \ X' \sim \mu' \big\}.
$
Finally, there exist
 $ X,X' \in L^2(\mathcal F; \mathbb R^{d })$ such that $\P_{(X , X')}=\pi^* \ \ \mbox{and} \ \ \mathcal{W}_2\left(\mu, \mu^{\prime}\right)=\| X-X'\|_{L^2}.$
 
 \subsection{Derivatives in the Wasserstein space.} \label{subsec:derivatives_wasserstein}
 We recall here the definition of linear functional derivative and of $L-$derivative \cite{carmona_delarue}. 
 \begin{definition}\label{def:L_derivative} 
{\rm (i)} A map $\phi \colon  \mathcal{P}_2\left(\mathbb{R}^d\right) \rightarrow \mathbb R$ is said to have  a  \emph{linear functional derivative} if there exists a map $\delta_\mu \phi: \mathcal{P}_2\left(\mathbb{R}^d\right) \times \mathbb{R}^d \rightarrow \mathbb R$ continuous for the product topology, such that the function $ x \mapsto \delta_\mu \phi(\mu, x)$ has at most  quadratic growth in $x$ locally uniformly in $\mu$ and
$$
\phi (\mu^{\prime})-\phi(\mu)=\int_0^1\int_{\mathbb{R}^d} \delta_\mu \phi (\mu^\lambda,x) \left(\mu^{\prime}-\mu\right)(dx) d \lambda, \ \ \ \mu, \mu^{\prime}  \in \mathcal{P}_2 (\mathbb{R}^d),
$$
where  $\mu^\lambda:=\lambda ( \mu^{\prime}-\mu)+ \mu$. 
We  denote by  $\mathcal C^1_{LF}( \mathcal{P}_2(\mathbb{R}^d);\R)$ the class of such  maps.
\smallskip

{\rm (ii)} A map  $\phi \colon  \mathcal{P}_2\left(\mathbb{R}^d\right) \rightarrow \mathbb R$ is said to be \emph{$L$-differentiable} if: it   belongs to $\mathcal{C}_{LF}^1\left( \mathcal{P}_2\left(\mathbb{R}^d \right);  \mathbb{R} \right)$; 
 for every $\mu  \in \mathcal{P}_2\left(\mathbb{R}^d\right)$ the real-valued  map $x \mapsto\delta_\mu \phi(\mu  , x)$ is  differentiable;
 the map $(\mu, x) \mapsto \partial_x \delta_\mu \phi(\mu, x) \in \mathbb{R}^{d}$ is  continuous and has  at most of quadratic growth in $x$, locally uniformly in $\mu$. In this case,
   the map $ \partial_x \delta_\mu \phi $ is called the \emph{$L$-derivative} of $\phi$. We denote the set of such maps by $C^1\left(\mathcal{P}_2\left(\mathbb{R}^d\right);\mathbb{R} \right)$.   Analogously, we define the space $C^1\left(\mathcal{P}_2\left(\mathbb{R}^d\right);\mathbb{R}^{d\times m} \right)$.  
\end{definition}
We recall  that the $L-$derivative $\partial_x \delta_\mu \phi$
coincides with the Lions derivative \cite{carmona_delarue} {and the Wasserstein gradient \cite{villani2009optimal}}.
We extend the previous definition as follows.
{\begin{definition}
\begin{enumerate}[(i)]
\item[]
\item $C^{1} (\mathbb R^d \!\times\! \mathcal{P}_2\left(\mathbb{R}^d\right);\mathbb R^{d \times m}  )$ is the space of maps $\phi \colon \mathbb R^d \times \mathcal{P}_2\left(\mathbb{R}^d\right) \longrightarrow \mathbb R^{d \times m}$ such that
	\begin{itemize}
	\item $\phi(\cdot ,\mu) \!\in\! C^1\left(\mathbb{R}^d;\mathbb{R}^{d\times m} \right)$ for all $\mu  \in \mathcal{P}_2\left(\mathbb{R}^d\right)$ with partial Jacobian denoted by $\partial_{x} \phi (x,\mu)$,
\item   $\phi(x,\cdot) \!\in\! C^1\left(\mathcal{P}_2\left(\mathbb{R}^d\right);\mathbb{R}^{d\times m} \right)$ for all $x \in \mathbb R^d$, with partial $L-$derivative denoted by $\partial_{\tilde x} \delta_{\mu} \phi(x,\mu,\tilde x)$. 
\end{itemize}
\smallskip
\item 
We denote by 
\begin{itemize}
\item  $C^{1}_b (\mathbb R^d \times \mathcal{P}_2\left(\mathbb{R}^d\right);\mathbb R^{d \times m}  )$ the subspace of functions $\phi$ with bounded $\partial_{x} \phi$ and $\partial_{\tilde x} \delta_{\mu} \phi$.
\item $C^{1,1} (\mathbb R^d \times \mathcal{P}_2\left(\mathbb{R}^d\right);\mathbb R^{d \times m}  )$ (resp., $C^{1,1}_b (\mathbb R^d \times \mathcal{P}_2\left(\mathbb{R}^d\right);\mathbb R^{d \times m}  )$) the corresponding subspace of functions $\phi$ with Lipschitz (resp., Lipschitz and bounded) partial gradients $\partial_{x} \phi$ and $\partial_{\tilde x} \delta_{\mu} \phi$.
\end{itemize}
\end{enumerate}
\end{definition}}

\subsection{McKean-Vlasov stochastic differential equation} Let $T>0$ and let $\left \{B_t\right \}_{t \in [0,T]}$ be an $m$-dimensional Brownian motion defined on a complete atomless probability space $(\Omega, \mathcal{F}, \P)$. We denote by $\mathbb{F}=(\mathcal{F}_t)_{t \in [0,T]}$ the completion of the filtration generated by $B$, which is also right-continuous, so that it satisfies the usual conditions. We  denote by $\mathcal S^2([0,T];\mathbb R^{d \times h})$ the space of continuous $\mathbb{F}-$adapted processes  $Y$ with values in $\mathbb R^{d \times h}$  such that $\|Y \|_{\mathcal S^2}^2:=\E\left[ \sup_{t \in [0,T]} |Y_t|^2\right] < \infty.$

{Given  $\xi \in L^2\left(\mathcal F_0; \mathbb R^d\right)$ and $b=(b^0,b^1) \colon \mathbb R^d \times \mathcal P(\mathbb R^d) \to \mathbb R^d\times\mathbb R^{d \times m}$, we  consider a SDE of  McKean-Vlasov (MKV) type: 
\begin{equation}\label{eq:sde}
X_t^{\xi}=\xi+\int_0^t b( X_s^{\xi}, \P_{X_s^{\xi}} ) d Z_s, \quad t \in[0, T],
~\mbox{where}~
Z_t
=\begin{bmatrix}
Z_t^0\\
Z_t^1
\end{bmatrix}
:=\begin{bmatrix}
t\\
B_t
\end{bmatrix}.
\end{equation}
Under the standard Lipschitz condition
\begin{equation}\label{ass:sde}
\left|b( x, \mu)-b\left(x^{\prime}, \mu^{\prime}\right)\right|  \leq C \left(\left|x-x^{\prime}\right|+\mathcal{W}_2\left(\mu, \mu^{\prime}\right)\right),
~(x, \mu),\left(x^{\prime}, \mu^{\prime}\right) \in \mathbb{R}^d \times \mathcal{P}_2\big(\mathbb{R}^d\big),
\end{equation}
for some constant $C>0$, there exists a unique  solution $X^{\xi}$ to \eqref{eq:sde} in the class of processes   $\mathcal S^2([0,T];\mathbb R^d)$, see e.g. \cite[Th.\,4.21 and Lemma 4.34]{carmona_delarue}. Moreover, there exists  $C>0$ such that  
\begin{equation}\label{eq:estimate_E_X_X0_wrt_initial_cond}
\|X^{\xi}\|_{\mathcal S^2} \leq C \|\xi\|_{L^2}
~\mbox{and}~
\|X^{\xi}-X^{ \xi'}\|_{\mathcal S^2} \leq C \|\xi- \xi'\|_{L^2},
~\mbox{for all}~
\xi, \xi' \in L^2\big(\mathcal F_0 ; \mathbb{R}^d\big).
\end{equation}
By the uniqueness in law of the solution to the MKV SDE, it follows that the law of the process $X^\xi$ is independent of the choice of the initial r.v. $\xi$ in the set
\begin{equation}\label{class}
\mathcal{R}_{\mu_{0}}:=
\big\{\xi\in L^2(\mathcal F_0;\mathbb R^d): \ \P_{\xi}=\mu_{0}\big\}.
\end{equation}
In the following, we shall often confuse $\mu_{0}$ with the reference initial conditions $\xi\in\mathcal{R}_{\mu_{0}}$, and abuse notation writing $\P_{\!X_t^{\mu_0}}$ instead of $\P_{\!X_t^{\xi}}$ for an arbitrary $\xi\in\mathcal{R}_{\mu_{0}}$.
}

\section{Sensitivity of functionals for McKean-Vlasov SDEs}\label{sec:sensitivity} 
\label{sec:main} 
    
Let $\phi \colon  \mathcal{P}_2(\mathbb{R}^d) \longrightarrow \mathbb R$, our  goal is to analyze the differentiability at the origin $r=0$ of the map
$$
\Phi(\mu_0 ,r) := \sup_{\mu_0' \in B^2_r(\mu_0)} \phi \big(\mu_T'\big), \ \ \   \quad   \mu_0 \in \mathcal{P}_2(\mathbb{R}^d), \ r \geq 0 ,
$$
where {$\mu_t^\prime:=\P_{\!X_t^{\mu'_0}}=\P_{\!X_t^{\xi'}}$ for $t\in[0,T]$, and
 $X^{\xi'}$ is the solution to \eqref{eq:sde} with an arbitrary initial condition $\xi'\in\mathcal{R}_{\mu_{0}'}$.
 When $\mu_0'=\mu_0,$ we denote the corresponding reference starting condition by $\xi$ and  set $\mu_t:=\P_{X_t^{\xi}}$ for $t\in[0,T]$. }
 
 \begin{assumption}\label{ass:linear_fuunctional_derivative} The functional $\phi$ satisfies the following.
\begin{enumerate}[(i)]
\item There exists $C>0$ such that
$| \phi (\mu )|\leq C(1+\|\mu \|_2^2),$ for all  $\mu \in \mathcal P_2(\mathbb R^d).$ 
\item  $\phi \in C^1\left(\mathcal{P}_2\left(\mathbb{R}^d\right)\right)$, there exists a modulus of continuity $\varpi$  such that
\begin{align*}
& |\partial_x \delta_\mu \phi (\mu',x )- \partial_x \delta_\mu \phi (\mu,x )| \leq \varpi (\mathcal W_2(\mu',\mu)), & \mu, \mu' \in \mathcal P_2(\mathbb R^d),x \in \mathbb R^d,
\end{align*}
and for all $\mu \in \mathcal P_2(\mathbb R^d)$ a modulus of continuity $\varpi_\mu$ such that 
\begin{align*}
& |\partial_x \delta_\mu \phi (\mu,x' )- \partial_x \delta_\mu \phi (\mu,x )| \leq  \varpi_\mu (|x'-x|), &   x,x' \in \mathbb R^d.
\end{align*}
\end{enumerate}
\end{assumption}
We provide some examples of functionals satisfying the previous assumption and which justify that the variance criterion considered in the example of Section \ref{sec:example}  is included in our setting.
\begin{example}\label{ex:linear_functionals}
   Let $f \in C^1(\mathbb R^d)$ be a function with sub-quadratic  growth and uniformly continuous gradient $\nabla f$. Then $\phi(\mu):=\int_{\mathbb R^d} f(x) \mu(dx)$ satisfies Assumption \ref{ass:linear_fuunctional_derivative}.  Indeed, (i) follows from the subquadratic  growth of $f$; (ii) follows from the fact that  $\partial_x \delta_\mu \phi (\mu,x )=\nabla f(x)$.
\end{example}
\begin{example}\label{ex:functions_of_linear_functionals}
Let $\psi \in C^1(\mathbb R)$ have  uniformly continuous  derivative $\psi'$ and let $f\in C^1_b(\mathbb R^d)$  have  uniformly continuous gradient $\nabla f$. Then $\phi(\mu):=\psi \left (\int_{\mathbb R^d} f(x) \mu(dx)\right)$ satisfies Assumption \ref{ass:linear_fuunctional_derivative}.  Indeed, (i) follows from sublinear growth of $\psi,f$. As for (ii), we have 
$\partial_x \delta_\mu \phi (\mu,x )=\psi'\left(\int_{\mathbb R^d} f(y) \mu(dy) \right)\nabla f(x)$, so that for some modulus $\varpi$ (which may change from line to line), denoting  by $\mathcal W_1$ the $1$-Wasserstein distance, we have
\begin{align*}
   |\partial_x \delta_\mu \phi (\mu',x )-\partial_x \delta_\mu \phi (\mu,x )|
   &\leq \left|\psi'\left(\int f d\mu'\right) - \psi'\left(\int f d\mu\right) \right| |\nabla f(x)| 
   \\
   & \leq \varpi\left (\left |\int f d\mu'-\int f d\mu\right | \right)\leq \varpi\left (\mathcal W_1(\mu',\mu) \right) \leq \varpi\left (\mathcal W_2(\mu',\mu) \right),
\end{align*}
where in the third inequality we have used the Kantorovich-Rubinstein duality \cite[Corollary 5.4]{carmona_delarue} up to rescaling by the Lipschitz constant of $f$. Finally, we have
\begin{align*}
   |\partial_x \delta_\mu \phi (\mu,x' )-\partial_x \delta_\mu \phi (\mu,x )|\leq  \left| \psi'\left(\int f d\mu \right) \right| |\nabla f(x')-\nabla f(x)| 
   \leq \left| \psi'\left(\int fd\mu\right) \right| \varpi(|x'-x|). 
\end{align*}
\end{example}
\medskip
For later use, we start by providing some estimates.
\begin{lemma}\label{lemma:growth_bounds_phi_der_phi}
Let Condition \eqref{ass:sde} and Assumption \ref{ass:linear_fuunctional_derivative} (i)  hold.  Then:
\begin{enumerate}[{\rm (i)}]
\item  There exists $C>0$ such that
$| {\Phi} (\mu_0,r )|\leq  C (1+r^2+ \|\mu_0 \|_2^2),$ for every  $\mu_0 \in \mathcal{P}_2(\mathbb{R}^d), \   r >0.$
\item {Under the additional Assumption   \ref{ass:linear_fuunctional_derivative} (ii), we have: 
$$
K_R
:=
\sup_{t \in[0,T]}\sup_{\mu_0\in B^2_R(\mu_0)}
\left \| \partial_x \delta_\mu \phi (\mu_t,X_t^{\xi})\right \|_{L^2} 
<\infty
~\mbox{for all}~R>0.
$$}
\end{enumerate}
\end{lemma}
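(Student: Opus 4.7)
For part (i), the plan is to combine the quadratic growth of $\phi$ from Assumption~\ref{ass:linear_fuunctional_derivative}(i) with the $L^2$ stability estimate \eqref{eq:estimate_E_X_X0_wrt_initial_cond}. For any $\mu_0'\in B^2_\delta(\mu_0)$, I pick $\xi'\in\mathcal{R}_{\mu_0'}$, observe that $\|\mu_T'\|_2=\|X_T^{\xi'}\|_{L^2}\leq C\|\xi'\|_{L^2}=C\|\mu_0'\|_2$ by \eqref{eq:estimate_E_X_X0_wrt_initial_cond}, and then bound $\|\mu_0'\|_2\leq \|\mu_0\|_2+\delta$ using the triangle inequality for $\mathcal{W}_2$ together with \eqref{eq:W2(mu,delta0)}. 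Squaring, inserting into $|\phi(\mu_T')|\leq C(1+\|\mu_T'\|_2^2)$, and taking the supremum over $B^2_\delta(\mu_0)$ then yields (i) up to a harmless change of the constant $C$.

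For part (ii), the plan is to upgrade the uniform continuity of Assumption~\ref{ass:linear_fuunctional_derivative}(ii) into a linear growth bound
$$
|\partial_x\delta_\mu\phi(\mu,x)| \leq |\partial_x\delta_\mu\phi(\delta_0,0)| + \varpi(1)\bigl(1+\|\mu\|_2+|x|\bigr),
$$
and then compose with the MKV SDE estimates. To establish this linear bound, I fix $(\mu,x)$, set $N=\lceil\|\mu\|_2+|x|\rceil$, interpolate between $(\delta_0,0)$ and $(\mu,x)$ by a constant-speed $\mathcal{W}_2$-geodesic in the measure coordinate and a straight line segment in the spatial coordinate, split the path into $N$ increments each of combined $\mathcal{W}_2+|\cdot|$ length at most $1$, telescope, and apply Assumption~\ref{ass:linear_fuunctional_derivative}(ii) on each step. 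Once the linear growth is in hand, I evaluate at $(\mu_t,X_t^\xi)$, apply Minkowski's inequality in $L^2$, and use $\|\mu_t\|_2=\|X_t^\xi\|_{L^2}\leq \|X^\xi\|_{\mathcal{S}^2}\leq C\|\xi\|_{L^2}=C\|\mu_0\|_2\leq CR$ from \eqref{eq:estimate_E_X_X0_wrt_initial_cond} to obtain a bound uniform in $t\in[0,T]$ and $\mu_0\in B^2_R(\delta_0)$.

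The main obstacle I anticipate is the passage from the uniform modulus of continuity to the linear growth bound: while this is standard on normed vector spaces, here one must justify the chaining argument on the mixed metric space $\mathcal{P}_2(\mathbb{R}^d)\times\mathbb{R}^d$, invoking the existence and constant-speed parametrization of $\mathcal{W}_2$-geodesics. Everything else reduces to direct applications of \eqref{eq:estimate_E_X_X0_wrt_initial_cond} and \eqref{eq:W2(mu,delta0)}, together with the trivial bounded-set remark $\|\xi\|_{L^2}=\|\mu_0\|_2\leq R$ for $\mu_0\in B^2_R(\delta_0)$.
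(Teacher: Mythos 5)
Your proof of part (i) is essentially the same as the paper's: bound $|\phi(\mu_T')|$ by Assumption~\ref{ass:linear_fuunctional_derivative}(i), transfer to $\|\xi'\|_{L^2}=\|\mu_0'\|_2$ via \eqref{eq:estimate_E_X_X0_wrt_initial_cond}, and use the triangle inequality with \eqref{eq:W2(mu,delta0)}. Nothing to add there.

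For part (ii) you take a genuinely different route. The paper's proof is shorter: since Assumption~\ref{ass:linear_fuunctional_derivative}(ii) includes $\phi\in C^1(\mathcal{P}_2(\mathbb{R}^d))$, Definition~\ref{def:L_derivative} already provides a growth bound for $\partial_x\delta_\mu\phi(\mu,\cdot)$ locally uniformly in $\mu$; the paper simply notes $\|\mu_t\|_2\le C(1+R)$ uniformly in $t$ and $\mu_0\in B^2_R(\delta_0)$ via \eqref{eq:estimate_E_X_X0_wrt_initial_cond} and then plugs $(\mu_t,X_t^\xi)$ into that bound. You instead manufacture a linear growth bound from the \emph{uniform} modulus of continuity in Assumption~\ref{ass:linear_fuunctional_derivative}(ii) by a chaining argument. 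This is valid: for the geodesic connecting $\delta_0$ to $\mu$ you need no heavy machinery, since the push-forward $s\mapsto(s\cdot)_\#\mu$ is an explicit constant-speed geodesic of length $\|\mu\|_2=\mathcal W_2(\mu,\delta_0)$; combined with the straight segment from $0$ to $x$ and splitting into $N=\lceil\|\mu\|_2+|x|\rceil$ steps of combined length $\le1$, telescoping gives
$|\partial_x\delta_\mu\phi(\mu,x)|\le|\partial_x\delta_\mu\phi(\delta_0,0)|+\varpi(1)(1+\|\mu\|_2+|x|)$,
and the rest follows exactly as you say. The trade-off: your argument is a few lines longer, but it yields a growth bound that is \emph{global} in $\mu$ (not merely locally uniform) and is therefore more self-contained — it does not lean on the ``locally uniformly in $\mu$'' clause of Definition~\ref{def:L_derivative}, whose applicability over the non-compact ball $B^2_R(\delta_0)$ one might otherwise want to check. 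Both proofs are correct.
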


\begin{proof}
(i) For $r >0$, let $\mu_0 \in \mathcal{P}_2(\mathbb{R}^d)$ and $\mu'_0\in B^2_r(\mu_0)$. By Assumption \ref{ass:linear_fuunctional_derivative} we have for some constant $C$ that may vary from line to line in the inequalities,
$
\big|\phi \big(\mu_T'\big)\big| \leq C\big(1+\big\|\mu_T'\big \|_2^2\big)=C\big(1+\big\|X_T^{\xi'}\big\|_{L^2}^2\big)
$. Then, it follows from \eqref{eq:estimate_E_X_X0_wrt_initial_cond} together with \eqref{eq:W2(mu,delta0)} and the triangle inequality for $\mathcal W_2(\cdot,\cdot)$ that:
$$
\big|\phi \big(\mu_T'\big)\big| 
\leq
C\big(1+\big\|\xi'\big\|_{L^2}^2\big) 
=
C\big(1+\big\|\mu_0'\big\|_{2}^2\big)
\leq
C\big(1+\mathcal W_2(\mu_0',\mu_0)^2+\big\|\mu_0\big\|_{2}^2\big) 
\leq 
C (1+r^2+ \|\mu_0 \|_2^2),
$$
which induces the required claim.

 (ii)
For arbitrary $t \in[0,T]$ and $\mu_0 \in \mathcal{P}_2\left(\mathbb{R}^d\right)$ such that $\|\mu_0 \|_2 \leq  R$, {by \eqref{eq:estimate_E_X_X0_wrt_initial_cond} we have 
  $ \|\mu_t\|^{2}_{2}= \|X_t^{\xi} \|^{2}_{L^2}\leq  {C} (1+ \|\xi\|^{2}_{L^2})={C} (1+ \|\mu_0\|^{2}_{2})\leq {C} (1+R^{2})$ for some $C>0$.}
We can  then use 
\eqref{eq:estimate_E_X_X0_wrt_initial_cond} 
and  the growth bound for $|\partial_x \delta_\mu \phi |$ in Definition \ref{def:L_derivative} to get, {for some $C_{R}>0$,}
\begin{align*}
\left \| \partial_x \delta_\mu \phi (\mu_t,X_t^{\xi})\right \|_{L^2}^2 \leq  C_R (1+ \|X_t^{\xi} \|_{L^2}^2) \leq  C_R (1+ \|\xi\|_{L^2}^2)=C_R (1+ \|\mu_0\|_{2}^2)\leq {C_{R}(1+{R}^{2})}.\vspace{-.6cm}
\end{align*}
\end{proof}

To go towards our main result concerning the study of the sensitivity of the functional $\Phi$ with respect to $\mu$, we first investigate the dependence of the solution of the MKV equation with respect to the initial datum. Questions of this type  have been addressed  
in \cite{chassagneux_delarue_crisan} and in \cite{buckdahn_li_peng_rainer} (see also \cite{hao_li} for extensions to McKean Vlasov SDEs with jumps and  \cite{buckdahn_li_xing} for mean-field backward doubly stochastic differential equations). {We notice that these papers address the smoothness of the map $\xi\mapsto X_t^{\xi}$ with different approaches. While \cite{chassagneux_delarue_crisan} show the Gateaux differentiability by analyzing the dependence of the Picard iterations on the initial law, we shall follow the approach of \cite{buckdahn_li_peng_rainer} who introduced an auxiliary process with initial law concentrated at a point thus reducing the initial value of the process to a deterministic object.} 

Given two Banach spaces $(E,\|\cdot \|_E),(F,\| \cdot \|_F)$, we denote by $\mathcal L(E,F)$ the Banach space of linear bounded  operators $L$ from $E$ to $F$, endowed with the operator norm 
$|L|_{\mathcal L } := \sup_{\|x\|_E=1}\|Lx\|_F$.
The notion of derivative of the map  $\xi\mapsto X_t^{\xi}$ is provided by Theorem \ref{th:derivative_XtX0_wrt_X0} below. Such a derivative is seen as an operator  $D_{\xi}X_t^{\xi}\in\mathcal{L}\left (L^2(\mathcal F_0;\mathbb R^d); L^2(\mathcal F_t;\mathbb R^d)\right)$. Relying on the results of \cite{buckdahn_li_peng_rainer}, we are able to prove Gateaux differentiability of this map and some  estimates concerning $D_{\xi}X_t^{\xi}$ and its adjoint $(D_{\xi}X_t^{\xi})^*$.
In  Subsection  \ref{sec:derivativeBuck}, we will also  derive their explicit expressions \eqref{GD} and \eqref{eq:D_X0_XtX0*}. We  refer to Remark \ref{rem:comparison_buck} for a discussion. {Our analysis requires  to strengthen condition \eqref{ass:sde} as follows.}
\begin{assumption}\label{ass:derivatives_coeff} $b\in C^{1,1}_b (\mathbb R^d \times \mathcal{P}_2\left(\mathbb{R}^d \right) ; \mathbb R^{d} \times  \mathbb R^{d \times m})$.
 \end{assumption}
   \begin{example}\label{ex:example_coeff}
Assume $d=m=1$ for simplicity.  Let $\hat b\in C^{1,1}_b (\mathbb R \times \mathbb R ; \mathbb R \times  \mathbb R)$ and $g \in C^{1,1}_b(\mathbb R)$. Then $b(x,\mu):=\hat b \left (x,\int_{\mathbb R^d} g(y)\mu(dy)\right )$ satisfies Assumption \ref{ass:derivatives_coeff}. Indeed, denoting by $\partial_x \hat b, \partial_y \hat b$, respectively, the partial derivatives with respect to the first and second variable of $\hat b$, we have $$\partial_{x}  b (x,\mu )=\partial_{x}\hat b \left(x,\int_{\mathbb R^d} g(z) \mu(dz) \right), \ \  
 \partial_{\tilde x}  \delta_\mu b (x,\mu,\tilde x )=\partial_{y}\hat b\left(x,\int_{\mathbb R^d} g(z) \mu(dz) \right)g'(\tilde x).$$ Proceeding as in Example \ref{ex:functions_of_linear_functionals}, we have the claim.
 \end{example}
\begin{theorem}\label{th:derivative_XtX0_wrt_X0}
Let Assumption   \ref{ass:derivatives_coeff} hold. 
\\
{\rm (i)} For all  $t \in [0,T]$, the map
$
\xi\in L^2(\mathcal F_0;\mathbb R^{d})  \mapsto  X_t^{\xi}\in L^2(\mathcal F_t;\mathbb R^{d })$
 is  Gateaux differentiable with bounded differential; i.e., for all ${\xi} \in L^2 (\mathcal F_0;\mathbb R^d)$, there exists $D_{\xi} X_t^{\xi} \in \mathcal L\left (L^2(\mathcal F_0;\mathbb R^d); L^2(\mathcal F_t;\mathbb R^d)\right)$ such that
$$
\sup_{\xi \in L^2 (\mathcal F_0;\mathbb R^d)}
\sup_{t \in [0,T]}
\left \| D_{\xi}X_t^{\xi}  \right \|_{\mathcal L}<\infty
~~\mbox{and}~~
\lim_{r \to 0}\frac 1 {|r|}   \left  \|  X_t^{ \xi+r \eta}-X_t^{\xi}-r\, D_{\xi} X_{t}^{\xi} \eta \right\|_{L^2}=0,
\ \ \ \eta \in L^2 (\mathcal F_0;\mathbb R^d).
$$
Moreover, $\xi\mapsto D_{\xi}X_t^{\xi}$ is uniformly strongly continuous; that is, for every $\eta \in L^2 (\mathcal F_0;\mathbb R^d)$ there exists $ \varpi_\eta$ modulus of continuity such that  
\begin{align}
\left \|\left (D_{\xi}X_t^{\xi'} - D_{\xi}X_t^{\xi}\right) \eta\right \|_{L^2} \leq \varpi_\eta (\|\xi'-\xi \|_{L^2}), \ \  \xi, \xi' \in L^2 (\mathcal F_0;\mathbb R^d).
\end{align}
{\rm (ii)} The adjoint  $\left (D_{\xi}X_t^{\xi} \right )^* \in \mathcal L  (L^2(\mathcal F_t;\mathbb R^d); L^2(\mathcal F_0;\mathbb R^d) )$ inherits the boundedness and the uniform strong continuity, i.e. $\big\|\big(D_{\xi}X_t^{\xi}\big)^*\big\|_{\mathcal L}$ bounded uniformly in $(t,\xi)$ and for all $t \in [0,T]$:
$$
\left \|\left [ \left(D_{\xi}X_t^{\xi'}\right)^*-\left( D_{\xi}X_t^{\xi}\right)^*\right]\eta\right \|_{L^2} \leq \varpi_{\eta} (\|\xi'-\xi \|_{L^2}), \    \xi,\xi'\in L^2 (\mathcal F_0;\mathbb R^d),~\eta \in L^2 (\mathcal F_t;\mathbb R^d).
$$
for some modulus of continuity $\varpi_\eta$. 
\end{theorem}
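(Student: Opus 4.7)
Following the decoupling approach of Buckdahn--Li--Peng--Rainer mentioned in the excerpt, my starting point is to fix $\xi\in L^2(\mathcal F_0;\R^d)$, freeze the measure flow $\mu_s:=\P_{X_s^\xi}$, and introduce the auxiliary SDE
$$Y_t^x = x + \int_0^t b\big(Y_s^x,\mu_s\big)\,dZ_s,\qquad x\in\R^d,$$
with \emph{deterministic} initial point. Under Assumption \ref{ass:derivatives_coeff} this is a classical SDE whose flow $x\mapsto Y_t^x$ is $C^{1,1}$, with $\partial_x Y_t^x$ solving a linear SDE whose coefficients are bounded by $\|\partial_x b\|_\infty$. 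Viewing $Y_t^x$ as also depending on the frozen measure flow, the Lions derivative $\partial_\mu Y_t^x(y)$ exists and satisfies an analogous linear SDE. Since $X_t^\xi(\omega)=Y_t^{\xi(\omega)}(\omega)$ almost surely, the dependence of $X_t^\xi$ on $\xi$ splits into a pointwise channel (captured by $\partial_x Y_t^\xi$) and a measure channel (captured by $\partial_\mu Y_t^\xi$).

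The first substantive step is to identify the Gateaux derivative. For $\eta\in L^2(\mathcal F_0;\R^d)$ and small $r\neq 0$, I would insert the auxiliary process and split
$$X_t^{\xi+r\eta}-X_t^\xi \;=\; \big[Y_t^{\xi+r\eta,\mu^r}-Y_t^{\xi,\mu^r}\big] + \big[Y_t^{\xi,\mu^r}-Y_t^{\xi,\mu}\big],\qquad \mu^r:=\P_{X^{\xi+r\eta}}.$$
A first-order expansion of each bracket, combined with the a priori bound $\|X^{\xi+r\eta}-X^\xi\|_{\mathcal S^2}\leq Cr\|\eta\|_{L^2}$ from \eqref{eq:estimate_E_X_X0_wrt_initial_cond} and the Lipschitzness of $\partial_x b$, $\partial_\mu b$ coming from Assumption \ref{ass:derivatives_coeff}, identifies the derivative as
$$(D_\xi X_t^\xi\, \eta)(\omega) \;=\; \partial_x Y_t^{\xi(\omega)}\,\eta(\omega) + \EE\big[\partial_\mu Y_t^{\xi(\omega)}(\widehat\xi)\,\widehat\eta\big],$$
with $(\widehat\xi,\widehat\eta)$ an independent copy of $(\xi,\eta)$ on the auxiliary space; this matches the formula \eqref{GD}. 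Linearity in $\eta$ is immediate, and the uniform bound $\sup_{\xi,t}\|D_\xi X_t^\xi\|_{\mathcal L}<\infty$ follows from Gronwall applied to the two linear SDEs, whose coefficients are uniformly controlled by $\|b\|_{C^{1}_b}$.

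For the uniform strong continuity claim in (i), I would fix $\eta$ and bound $\|(D_\xi X_t^{\xi'}-D_\xi X_t^\xi)\eta\|_{L^2}$ directly from the explicit formula. The coefficients of the linear SDEs determining $\partial_x Y^{\xi'}$ and $\partial_\mu Y^{\xi'}$ differ from their $\xi$-counterparts only through the trajectory $(X^{\xi'},\mu')$, which is $C\|\xi'-\xi\|_{L^2}$-close to $(X^\xi,\mu)$ by \eqref{eq:estimate_E_X_X0_wrt_initial_cond}. Since $\partial_x b$ and $\partial_\mu b$ are Lipschitz, hence uniformly continuous on bounded sets, they induce a modulus $\varpi$ for the perturbed coefficients, and Gronwall propagates this modulus into a modulus $\varpi_\eta(\|\xi'-\xi\|_{L^2})$ uniformly in $t$. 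The $\eta$-dependence of the modulus is unavoidable because strong-operator continuity, unlike the operator norm, is not uniform in the test vector.

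Part (ii) is where an abstract argument fails: strong operator convergence does not transfer to adjoints in general (only weak convergence does). The resolution is to use the explicit formula \eqref{eq:D_X0_XtX0*} derived in Subsection \ref{sec:derivativeBuck}, which represents $(D_\xi X_t^\xi)^*$ as the time-zero value of a backward linear process driven by $\partial_x b$ and $\partial_\mu b$ along the trajectory of $X^\xi$. Boundedness of the adjoint is then automatic since $\|L^*\|_{\mathcal L}=\|L\|_{\mathcal L}$, while strong continuity reduces to rerunning the argument of (i) on this backward dual process, whose coefficients again depend on $\xi$ only through $(X^\xi,\mu)$. The hard part of the whole proof is keeping the modulus $\varpi_\eta$ uniform in $t\in[0,T]$ and $\xi$: because $\partial_x b,\partial_\mu b$ are only Lipschitz rather than $C^2$, the modulus cannot be made linear and its dependence on the law of $\eta$ is genuine, so the Gronwall estimates must be carried out at the level of abstract moduli of continuity rather than explicit Hölder rates.
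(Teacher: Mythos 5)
Your plan follows the same decoupling strategy as the paper (freeze $\mu_s:=\P_{X_s^\xi}$, study the flow $Y_t^x$ with deterministic initial point, split $D_\xi X_t^\xi$ into a pointwise channel $\nabla X_t^{\xi,\P_\xi}$ and a measure channel $\partial_\xi X_t^{\xi,\P_\xi}$), and the candidate formula you write down coincides with \eqref{GD}. However, you pass over precisely the two steps that carry the real technical weight, and your treatment of the adjoint misidentifies the mechanism.

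First, the Gateaux convergence $\frac{1}{|r|}\|X_t^{\xi+r\eta}-X_t^\xi-rD_\xi X_t^\xi\eta\|_{L^2}\to 0$ does \emph{not} follow from a first-order expansion plus the a priori estimate $\|X^{\xi+r\eta}-X^\xi\|_{\mathcal S^2}\leq Cr\|\eta\|_{L^2}$. The remainder coming from the pointwise channel is of the form $\big(\int_0^1\nabla X_t^{\xi+\theta r\eta,\cdot}d\theta - \nabla X_t^{\xi,\cdot}\big)r\eta$, i.e.\ a product of two random quantities each of size $O(|r|\,|\eta|)$ in $\omega$. Taking the $L^2$-norm of that product requires fourth moments of $\eta$; see Remark \ref{rem:comparison_buck}, which shows exactly that $\|X_t^{\xi+\eta}-X_t^\xi-\nabla X_t^\xi\eta\|_{L^2}\approx (\E|\eta|^4)^{1/2}$ and hence that the map is \emph{not} Fréchet differentiable. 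The paper therefore first proves the quantitative bound $I_t(|r|,\eta)\leq C|r|\|\eta\|_{L^4}^2$ for $\eta\in L^4$, and then passes to $\eta\in L^2$ via the truncation $\eta_N=\eta\mathbf 1_{|\eta|\leq N}$, using the uniform operator-norm bound to control $\|D_\xi X_t^\xi(\eta-\eta_N)\|_{L^2}$ and \eqref{eq:estimate_E_X_X0_wrt_initial_cond} to control $\|X_t^{\xi+r\eta}-X_t^{\xi+r\eta_N}\|_{L^2}$. Without this two-step argument your ``identifies the derivative'' is only formal. The same issue recurs in the uniform strong continuity estimate for $E_t^1(\eta)=\|(\nabla X_t^{\xi,\P_\xi}-\nabla X_t^{\xi',\P_{\xi'}})\eta\|_{L^2}$: the matrix difference is $O(\|\xi-\xi'\|_{L^2})$ in $L^2$, so it can only be multiplied against a \emph{bounded} $\eta$; the general $L^2$ case again needs the truncation $\eta_N$, which is exactly why the modulus $\varpi_\eta$ genuinely depends on $\eta$.

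Second, your resolution for part (ii) --- an explicit ``backward linear process'' representation of $(D_\xi X_t^\xi)^*$ --- is not what the paper uses, and is not needed. The paper's \eqref{eq:D_X0_XtX0*} simply says the adjoint splits the same way: the adjoint of the pointwise multiplication is (conditional expectation of) multiplication by the transpose matrix, and the adjoint of $\partial_\xi X_t^{\xi,\P_\xi}$ is whatever it is. The two channels are then treated differently, and this asymmetry is the key observation you are missing. For the measure channel, the paper has actually proved the \emph{operator-norm} Lipschitz estimate $\|\partial_\xi X_t^{\xi,\P_\xi}-\partial_\xi X_t^{\xi',\P_{\xi'}}\|_{\mathcal L}\leq C\|\xi-\xi'\|_{L^2}$ (estimate \eqref{eq:norm_estimate_DX0_XtX0-DX0_XtX0prime}), because this operator averages $\eta$ against the law via $\tilde\E[\,\cdot\,]$, so Cauchy--Schwarz on the auxiliary space kills the fourth-moment obstruction; operator-norm Lipschitz passes to the adjoint for free. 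Only the pointwise channel requires redoing the truncation argument with $\eta\in L^\infty(\mathcal F_t)$. So the adjoint is handled by exploiting that one of the two channels is actually better than strongly continuous, not by introducing a dual backward equation.

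You should also note that you have not proved the linearity-and-boundedness of $D_\xi X_t^\xi$ from the formula alone; the paper does this by conditioning on $(\xi,\eta)$ and invoking the deterministic-initial-point bounds \eqref{eq:tangent_lec_C} and Lemma \ref{lemma:derivative_X_t_x0_X0_wrt_X0}(iii), which is a clean route you should make explicit.
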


\begin{proof} See  Subsection  \ref{sec:derivativeBuck}.
\end{proof}
Our main result is the following.

\begin{theorem}\label{th:derivative_varphi}
Let Assumptions \ref{ass:linear_fuunctional_derivative} and  \ref{ass:derivatives_coeff} hold. Then, for each  $\mu_0 \in \mathcal{P}_2\left(\mathbb{R}^d\right)$, the function $r \longmapsto {\Phi}(\mu_0,r)$ is differentiable at   $0$ and 
\begin{align*}
\frac{ \partial {\Phi}}{\partial r} (\mu_0,0 ) =\Big\|  \big(D_{\xi}X_T^{\xi} \big)^*  \partial_x \delta_\mu \phi (\mu_T,X_T^{\xi}  )\Big\|_{L^2}.
\end{align*}
\end{theorem}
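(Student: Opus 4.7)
\emph{Plan.} My strategy is to reduce the problem to a maximization over $L^2(\mathcal F_0;\mathbb R^d)$ and then to identify the Gateaux gradient of $\xi' \longmapsto \phi(\mathbb P_{X_T^{\xi'}})$ at $\xi$ by chaining the linear functional derivative of $\phi$ (Assumption \ref{ass:linear_fuunctional_derivative}) with the differentiability of $\xi' \longmapsto X_T^{\xi'}$ (Theorem \ref{th:derivative_XtX0_wrt_X0}). Using the identity $\mathcal W_2(\mu_0',\mu_0) = \inf\{\|\xi'-\xi\|_{L^2}: \xi'\in\mathcal R_{\mu_0'}\}$ and the existence of an optimal $L^2$-coupling on the atomless probability space, we rewrite
\[
\Phi(\mu_0,\delta) - \phi(\mu_T) = \sup_{\xi'\in L^2(\mathcal F_0;\mathbb R^d),\ \|\xi'-\xi\|_{L^2}\leq \delta} \bigl[\phi(\mathbb P_{X_T^{\xi'}}) - \phi(\mu_T)\bigr].
\]
Set $\eta_\star := (D_\xi X_T^\xi)^*\,\partial_x\delta_\mu\phi(\mu_T,X_T^\xi)$, which lies in $L^2(\mathcal F_0;\mathbb R^d)$ thanks to Lemma \ref{lemma:growth_bounds_phi_der_phi}(ii) and Theorem \ref{th:derivative_XtX0_wrt_X0}(ii). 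The goal is to show that the above supremum equals $\delta\,\|\eta_\star\|_{L^2} + o(\delta)$.

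\emph{Lower bound.} Assuming $\eta_\star\neq 0$ (the degenerate case being trivial), the candidate $\xi'_\delta := \xi + \delta\,\eta_\star/\|\eta_\star\|_{L^2}$ belongs to the feasible ball. Applying the linear functional derivative of $\phi$ together with the Gateaux differentiability of $\xi' \longmapsto X_T^{\xi'}$ along the single direction $\eta_\star/\|\eta_\star\|_{L^2}$ and passing to the adjoint yields
\[
\phi(\mathbb P_{X_T^{\xi'_\delta}}) - \phi(\mu_T) = \delta\,\|\eta_\star\|_{L^2} + o(\delta),
\]
whence $\liminf_{\delta\to 0^+} \tfrac{1}{\delta}\bigl[\Phi(\mu_0,\delta) - \phi(\mu_T)\bigr] \geq \|\eta_\star\|_{L^2}$.

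\emph{Upper bound.} For arbitrary $\xi'$ with $\|\xi'-\xi\|_{L^2}\leq\delta$, the linear functional derivative of $\phi$ combined with the mean value theorem along the path $s\longmapsto X_T^\xi + s(X_T^{\xi'}-X_T^\xi)$ gives
\[
\phi(\mathbb P_{X_T^{\xi'}}) - \phi(\mu_T) = \int_0^1\!\!\int_0^1 \mathbb E\bigl[\partial_x\delta_\mu\phi\bigl(\mu_T^\lambda,\ X_T^\xi + s(X_T^{\xi'}-X_T^\xi)\bigr)\cdot(X_T^{\xi'}-X_T^\xi)\bigr]\,ds\,d\lambda,
\]
where $\mu_T^\lambda := \lambda\,\mathbb P_{X_T^{\xi'}} + (1-\lambda)\mu_T$. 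The modulus of continuity from Assumption \ref{ass:linear_fuunctional_derivative}(ii), combined with the Lipschitz estimate \eqref{eq:estimate_E_X_X0_wrt_initial_cond}, reduces this to $\mathbb E[\partial_x\delta_\mu\phi(\mu_T,X_T^\xi)\cdot(X_T^{\xi'}-X_T^\xi)] + o(\|\xi'-\xi\|_{L^2})$ uniformly in $\xi'$ in the $\delta$-ball. Next, the Gateaux differentiability together with the strong continuity of $\xi\longmapsto D_\xi X_T^\xi$ (Theorem \ref{th:derivative_XtX0_wrt_X0}(i)) ensures that $r\longmapsto X_T^{\xi + r(\xi'-\xi)}$ is $C^1$ from $[0,1]$ into $L^2$, producing the integral representation $X_T^{\xi'}-X_T^\xi = \int_0^1 D_\xi X_T^{\xi_r}(\xi'-\xi)\,dr$ with $\xi_r := \xi + r(\xi'-\xi)$. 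Passing to the adjoint and applying Cauchy--Schwarz,
\[
\mathbb E[\partial_x\delta_\mu\phi(\mu_T,X_T^\xi)\cdot(X_T^{\xi'}-X_T^\xi)] = \int_0^1 \bigl\langle (D_\xi X_T^{\xi_r})^*\,\partial_x\delta_\mu\phi(\mu_T,X_T^\xi),\ \xi'-\xi\bigr\rangle_{L^2}\,dr,
\]
and the uniform strong continuity of the adjoint (Theorem \ref{th:derivative_XtX0_wrt_X0}(ii)) applied to the \emph{fixed} direction $\eta := \partial_x\delta_\mu\phi(\mu_T,X_T^\xi)$ yields $\|(D_\xi X_T^{\xi_r})^*\eta - \eta_\star\|_{L^2} \leq \varpi_\eta(\delta)$ with $\varpi_\eta$ independent of $\xi'$. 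This produces $\Phi(\mu_0,\delta) - \phi(\mu_T) \leq \delta\,\|\eta_\star\|_{L^2} + o(\delta)$, matching the lower bound.

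\emph{Main obstacle.} The heart of the argument is the upper bound, where pointwise Gateaux differentiability is not enough: one genuinely needs estimates uniform over all $\xi'$ in the $L^2$-ball of radius $\delta$. This uniformity is delivered precisely by Theorem \ref{th:derivative_XtX0_wrt_X0}(ii), which provides a modulus $\varpi_\eta$ for the adjoint depending \emph{only} on the fixed direction $\eta = \partial_x\delta_\mu\phi(\mu_T,X_T^\xi)$ and not on the perturbed initial condition $\xi'$. A secondary care-point is to make the remainder in the linear functional derivative expansion of $\phi$ uniformly $o(\|\xi'-\xi\|_{L^2})$; this requires combining Assumption \ref{ass:linear_fuunctional_derivative}(ii) with dominated convergence, exploiting the quadratic growth of $\partial_x\delta_\mu\phi$ to absorb the pointwise oscillations of $X_T^{\xi'}-X_T^\xi$ through its uniform $L^2$ control.
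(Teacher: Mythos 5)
Your proposal follows essentially the same route as the paper's proof: you use the chain rule combining the linear functional derivative and $L$-derivative of $\phi$ with the Gateaux derivative of $\xi'\mapsto X_T^{\xi'}$ (the paper does this via the triple $\lambda$-integral and Zeidler's mean value theorem), you estimate the error terms via the uniform strong continuity of the adjoint $(D_\xi X_T^{\xi'})^*$ from Theorem \ref{th:derivative_XtX0_wrt_X0}(ii), and for the lower bound you perturb along the specific direction $\xi+\delta\,\eta_\star/\|\eta_\star\|_{L^2}$ exactly as the paper does. The only cosmetic difference is that you first isolate the leading term $\E[\partial_x\delta_\mu\phi(\mu_T,X_T^\xi)\cdot(X_T^{\xi'}-X_T^\xi)]$ before linearizing $X_T$, whereas the paper carries all three $\lambda$-integrals simultaneously and compares $\zeta^\lambda$ with $\zeta$; these decompositions are equivalent.
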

{\begin{proof}
Denote $\Delta_r
:=
\frac{{\Phi}(\mu_0, r)-{\Phi}(\mu_0, 0)}{r},$ and $
\zeta:=\big(D_{\xi}X_T^{\xi} \big)^*  \partial_x \delta_\mu \phi (\mu_T,X_T^{\xi})
$ for an arbitrary $\xi\in \mathcal{R}_{\mu_{0}}$. We organize the proof in two steps. \smallskip

\noindent
\emph{Step 1.}  We first prove 
that $\limsup_{r \searrow 0} \Delta_r
\leq  \|  \zeta \|_{L^2}$.
By  definition of linear functional derivative and $L-$derivative, we may write
\begin{align*}
r\Delta_r
=  \sup_{\mu_0' \in B^2_r(\mu_0)} \left\{\phi \left (\mu_T'\right )- \phi \left (\mu_T  \right )\right\}
&=\sup_{\mu_0' \in B^2_r(\mu_0)} 
     \int_0^1\int_{\mathbb R^d} \delta_\mu \phi (\mu_T^{\lambda_1},x) (\mu_T'-\mu_T)(dx)
                 d \lambda_1
\\
&=\sup_{\mu_0' \in B^2_r(\mu_0)} 
     \int_0^1   \E \left[  \delta_\mu \phi (\mu_T^{\lambda_1},X_T^{\xi'}) 
                                  -\delta_\mu \phi (\mu_T^{\lambda_1},X_T^{\xi})   \right] d \lambda_1
\\
&= \sup_{\mu_0' \in B^2_r(\mu_0)} 
      \int_0^1\int_0^1  \E \left[  \left\langle \partial_x \delta_\mu \phi (\mu_T^{\lambda_1},X_T^{\lambda_2} ),
                                                                X_T^{\xi'}-X_T^{\xi}  
                                              \right\rangle 
                                     \right] d\lambda_2d \lambda_1,
\end{align*}
where 
$\mu_t^{\lambda_1}:=\lambda_1 ( \mu_t'-\mu_t) + \mu_t, \ \ X_t^{\lambda_2}:=\lambda_2(X_t^{\xi'}-X_t^{\xi}) +X_t^{\xi},$ $\lambda_1, \lambda_2\in [0,1], t \in [0,T].$
By Theorem \ref{th:derivative_XtX0_wrt_X0}, we  may apply \cite[Theorem 4.A, p. 148]{zeidler} to obtain the equality 
$X_T^{\xi'}-X_T^{\xi}=\int_0^1 \big[D_{\xi}X_T^{\xi^{\lambda_3}}\big](\xi'-\xi)d \lambda_3$ with $\xi^{\lambda_3}:=\lambda_3(\xi'-\xi) +\xi.$
Then, setting $\lambda=(\lambda_1, \lambda_2,\lambda_3)$,
\begin{align}
r\Delta_r
&=\sup_{\mu_0' \in B^2_r(\mu_0)} 
      \int_{[0,1]^3}  \E \left[  \left\langle \partial_x \delta_\mu \phi (\mu_T^{\lambda_1},X_T^{\lambda_2} ),
                                                           D_{\xi}X_T^{\xi^{\lambda_3}} (\xi'-\xi)
                                         \right\rangle
                                \right]d\lambda 
\nonumber \\
&=\sup_{\mu_0' \in B^2_r(\mu_0)} 
      \int_{[0,1]^3}  \E \left[  \left\langle \zeta^\lambda,
                                                           \xi'-\xi
                                         \right\rangle
                                \right]d\lambda,
~~\mbox{with}~~
\zeta^\lambda
:=
\big(D_{\xi}X_T^{\xi^{\lambda_3}}\big)^*
\partial_x \delta_\mu \phi (\mu_T^{\lambda_1},X_T^{\lambda_2} ). \label{eq:delta_Delta}
\end{align}
By the H\"older inequality, and choosing $\xi',\xi\in L^2(\mathcal F_0;\mathbb R^d)$ such that $\P_{(\xi',\xi)} \in \Pi(\mu_0,\mu'_0)$  is an  optimal coupling for $\mu'_0=\P_{\xi'},$ $\mu_0=\P_{\xi}$, this implies that
\begin{align}
\Delta_r
\leq \frac1{r} 
          \sup_{\mu_0' \in B^2_r(\mu_0)} \int_{[0,1]^3}  \big\| \zeta^\lambda\big\|_{L^2} \big\|  \xi'-\xi \big\|_{L^2}   d\lambda 
&= \frac1{r}
       \sup_{\mu_0' \in B^2_r(\mu_0)} \int_{[0,1]^3}  \big\| \zeta^\lambda\big\|_{L^2} \mathcal W_2(\mu_0',\mu_0)\   d\lambda
\nonumber\\
&\leq \sup_{\mu_0' \in B^2_r(\mu_0)} \int_{[0,1]^3}  \big\|  \zeta^\lambda\big\|_{L^2} \  d\lambda.\label{eq:equality_incremental limit_varphi}
\end{align}
We next estimate for all $\mu_0' \in B^2_r(\mu_0)$ the difference 
\begin{equation*}
\begin{aligned}
{E}
:=\big\| \zeta^\lambda-\zeta \big\|_{L^2} 
\leq E_1+E_2
~\mbox{where}~
& E_1:=\left \|  \left (D_{\xi}X_T^{\xi^{\lambda_3}} \right )^* \left [ \partial_x \delta_\mu \phi (\mu_T^{\lambda_1},X_T^{\lambda_2}  )- \partial_x \delta_\mu \phi (\mu_T,X_T^{\xi}  ) \right] \right \|_{L^2}
\\
& E_2:=  \left \| \left[ \left (D_{\xi}X_T^{\xi^{\lambda_3}} \right )^* - \left (D_{\xi}X_T^{\xi} \right )^*\right]  \partial_x \delta_\mu \phi (\mu_T,X_T^{\xi}  )\right \|_{L^2}.
\end{aligned}
\end{equation*}
Using  Theorem \ref{th:derivative_XtX0_wrt_X0} (iii),  Assumption \ref{ass:linear_fuunctional_derivative}(iii) (taking without loss of generality a concave modulus of continuity $\varpi_{\mu_T}$ therein), \eqref{eq:estimate_E_X_X0_wrt_initial_cond}, and Jensen's inequality, we have 
\begin{align*}
E_1&\leq C \left [ \left \|  \partial_x \delta_\mu \phi (\mu_T^{\lambda_1},X_T^{\lambda_2}  )- \partial_x \delta_\mu \phi (\mu_T,X_T^{\lambda_2} ) \right \|_{L^2}+\left \|  \partial_x \delta_\mu \phi (\mu_T,X_T^{\lambda_2}  )- \partial_x \delta_\mu \phi (\mu_T,X_T^{\xi}  ) \right \|_{L^2} \right]\\
& \leq C\left[ \varpi \left (\mathcal W_2(\mu_T^{\lambda_1},\mu_T)  \right) 
                    + \left \|  \varpi_{\mu_T} \left (|X_T^{\lambda_2} -X_T^{\xi} | \right )  \right \|_{L^2}\right] 
\\
& \leq C \left[\varpi \left (\left \| X_T^{\lambda_1} -X_T^{\xi}  \right \|_{L^2}  \right) 
                    + \varpi_{\mu_T} \left (\left \| X_T^{\lambda_2} -X_T^{\xi}   \right \|_{L^2}  \right)\right]\leq  \varpi \left ( \left \| \xi' -\xi \right \|_{L^2}  \right),
\end{align*}
where the moduli of continuity changed from line to line and we  dropped the dependence of $\varpi$ from $\mu_T$ since the letter is a fixed law (for fixed $\mu_0$).
By similar arguments, we may also estimate $E_2$ noting that, by Theorem \ref{th:derivative_XtX0_wrt_X0} (iii), we have $E_{2} \leq \varpi_{\xi} (\left \| \xi^{\lambda_3} -\xi \right \|_{L^2} )$, and we may then conclude that there exists a modulus of continuity $\hat\varpi$ such that 
\begin{equation}\label{eq:E}
E \leq \hat\varpi \left ( \left \| \xi' -\xi \right \|_{L^2}  \right)=\hat \varpi (\mathcal W_2(\mu_0',\mu_0) ) \leq \hat \varpi(r), \ \ \  \mu_0' \in B^2_r(\mu_0),
\end{equation}
where the last equality holds because $\P_{(\xi',\xi)}=:\pi^* \in \Pi(\mu_0,\mu'_0)$  is an  optimal coupling for $\mu'_0=\P_{\xi'}$ and $\mu_0=\P_{\xi}$. Plugging the last estimate in \eqref{eq:equality_incremental limit_varphi}, we obtain $\Delta_r \leq   \hat \varpi(r)+  \|\zeta \|_{L^2}$, which induces the required result by taking the  $\limsup_{r \to 0}$.
\medskip

\noindent \emph{Step 2.} We next show that $\|\zeta\|_{L^2} \leq \liminf_{r \searrow 0} \Delta_r$ to complete the proof of the theorem. Fix an arbitrary $\xi \in \mathcal{R}_{\mu_{0}}$, set $\hat \xi:=\xi+r \frac{\zeta}{ \|\zeta \|_{L^2} }$ and $ \hat\mu_0:=\P_{ \hat \xi}$. As $\zeta\in L^2(\mathcal F_0;\mathbb R^d),$ by Lemma \ref{lemma:growth_bounds_phi_der_phi} and Theorem \ref{th:derivative_XtX0_wrt_X0} (iii), we have 
 $ \hat \xi \in L^2(\mathcal F_0;\mathbb R^d)$. Moreover $\mathcal W_2( \hat \mu_0,  \mu_0) \leq \| \hat \xi-\xi\|_{L^2} = r,$ so that $ \hat \mu_0 \in B^2_r(\mu_0).$
Hence, by \eqref{eq:delta_Delta} and, using the same notations as in Step 1 with $\mu'_0:=\hat\mu_0$, we may write
\begin{align*}
\Delta_r
 \ge 
 \int_{[0,1]^3}  \frac{\E \big[ \big\langle  \zeta^\lambda, \hat \xi- \xi \big\rangle  \big]}{r\|\zeta\|_{L^2}}d\lambda
 =
\int_{[0,1]^3} \!\!\frac{\E \big[ \big\langle  \zeta^\lambda, \zeta \big\rangle \big]}{\|\zeta\|_{L^2}} d\lambda
& 
= 
\|\zeta\|_{L^2}
+ \int_{[0,1]^3}  \!\!\frac{\E \big[ \big\langle  \zeta^\lambda-\zeta, \zeta \big\rangle \big]}{\|\zeta\|_{L^2}}d\lambda
 \\
& \geq \|\zeta\|_{L^2}-\varpi (\| \hat\xi-\xi\|_{L^2} ) 
=  \|\zeta\|_{L^2} -\varpi (r),
\end{align*}
where the last inequality follows from \eqref{eq:E}. Taking the $\liminf_{r \to 0}$ completes the proof.
\end{proof}
}


 \section{Differentiability of solutions of McKean-Vlasov SDEs with respect to the initial datum}
 \label{sec:diff}
 
  \label{sec:derivative_process}
This section is dedicated to the proof of Theorem \ref{th:derivative_XtX0_wrt_X0} by essentially revisiting the arguments of \cite{buckdahn_li_peng_rainer}, pointing out some relevant features and completing them appropriately for our needs. 

\subsection{The auxiliary process $X_t^{x_0,\P_{\xi}}$} We recall that the idea 
of  \cite{buckdahn_li_peng_rainer} to study the differentiability of solutions of McKean-Vlasov SDEs with respect to the initial datum is to disentangle in the McKean-Vlasov SDE the dependence on the initial datum $\xi$ and the dependence of the coefficients on  the law $\P_{X_{s}}^{\xi}$. So,  one  starts by freezing the dependence of the coefficients on the flow of measures  $(\P_{X_s^{\xi}})_{s\in[0,T]}$  and  introduce the auxiliary  (standard) SDE in $X_{s}$
\begin{equation}\label{eq:sde_BLPR}
X_t=x_0+\int_0^t \hat b\left(s,  X_s\right) d Z_s, \quad  t \in[0, T],
\end{equation}
where $x_0\in\R^{d}$ and $\hat b\left(s, x\right):= b\left(x, \P_{X_{s}^{\xi}}\right) $.
Clearly, under our assumptions,   \eqref{eq:sde_BLPR} admits a unique solution  in the class $\mathcal{S}^2\left([0, T] ; \mathbb{R}^{d }\right)$, that we denote  by $X^{x_0,\xi}$. 
Notice that this solution only depends on the law of $\xi$: if $\P_{\xi}=\P_{\xi'}$, we have $\P_{X_t^{\xi}}=\P_{X_t^{\xi}}$ for every $t \in [0,T]$, so that  $X^{x_0,\xi}$ and $X^{x_0,\xi'}$ are indistinguishable. 
Then, we can define without ambiguity $X_t^{x_0,\P_{\xi}}:=X_t^{x_0,\xi}$. More generally, given $\hat \xi$ $\in L^{2}({\mathcal{F}_{0}})$,  we define without ambiguity
\begin{equation}\label{aas1}
X_t^{\hat\xi,{\xi}}(\omega):= X_t^{\hat\xi,\P_{\xi}}(\omega):= X_t^{\hat x_0,\P_\xi}(\omega)\mid_{\hat x_0=\hat \xi(\omega)} \quad \mbox{for a.e.} \ \omega \in \Omega, \quad  t \in [0,T].
\end{equation}
In particular
\begin{equation}\label{aas}
X_t^{\xi,{\xi}}(\omega):= X_t^{\xi,\P_{\xi}}(\omega)= X_t^{x_0,\P_\xi}(\omega)\mid_{x_0=\xi(\omega)} \quad \mbox{for a.e.} \ \omega \in \Omega, \quad  t \in [0,T].
\end{equation}
Clearly, $X_t^{\xi,\P_\xi}$ satisfies \eqref{eq:sde}; hence, 
by pathwise uniqueness of solutions to \eqref{eq:sde}, we have
\begin{equation}\label{aaa}
X^{\xi,\P_{\xi}}= X^{\xi}, \quad \P\textit{-a.s.}
\end{equation}
\begin{remark}\label{rem:conditional_expectation_wrt_eta_X0_FIRST}
Since the increments of the Brownian motion $B$ are independent of  $\mathcal{F}_{0}$,  we observe for later use that 
\begin{equation*}
\begin{aligned}
\E\left[    X_t^{\hat \xi, {\xi}} \,\big |\, \hat \xi=\hat x_{0}, \, \xi=x_0\right]=\E\left[  X_t^{\hat x_0, \delta_{x_0}}  \right],\quad  t\in [0,T], \ x_0, \hat x_{0} \in \mathbb R^d. 
\end{aligned}
\end{equation*}
In particular, taking $\hat \xi=\xi$, we obtain
\begin{equation*}
\begin{aligned}
\E\left[    X_t^{\xi, {\xi}} \,\big |\, \xi=x_0\right]= \E\left[    X_t^{\xi, \P_{\xi}} \,\big |\, \xi=x_0\right]=\E\left[  X_t^{x_0, \delta_{x_0}}  \right],\quad  t\in [0,T], \ x_0 \in \mathbb R^d. 
\end{aligned}
\end{equation*}
\end{remark}
\subsubsection{Derivative of $X_t^{x_0,\P_{\xi}}$ with respect to $x_0$ }
In this subsection we construct  the derivative of the auxiliary process $X_t^{x_0,\P_{\xi}}$ with respect to $x_0$. To this purpose, we introduce the tangent process  $\nabla X^{ x_0, \P_{\xi}}$ as the unique solution of
\begin{align}\label{eq:tangent_process}
&\nabla X_t^{ x_0, \P_{\xi}}= I_{d}+\int_0^t\partial_{x} b  \left(X_r^{x_0, \P_{\xi}}, \P_{X_r^{\xi}}\right)  \nabla  X_r^{x_0, \P_{\xi}} d Z_r ,
\end{align}
where $I_d$ denotes the identity matrix of $\R^{d\times d}$. Under Assumption  \ref{ass:derivatives_coeff}, for every $\xi \in L^2(\mathcal F_0; \mathbb R^{d })$, the map $x_0 \mapsto X^{x_0,\P_{\xi}} \in \mathcal{S}^2\left([0, T] ; \mathbb{R}^{d }\right)$ is continuously differentiable with respect to $x_0$ and
	\begin{equation}\label{limit}
\lim_{h \to 0} \frac 1{|h|}   \left \| X^{ x_0+h, \P_{\xi}}-X^{x_0, \P_{\xi}}-\nabla  X^{x_0, \P_{\xi}}  h\right \|_{ \mathcal S^2}=0.
\end{equation}
 Moreover,  there exists  $C >0$ such that, for every $t \in[0, T]$, every  $x_0, x_0^{\prime} \in \mathbb{R}^d$, and every  $\xi, \xi^{\prime} \in L^2\left(\mathcal F_0; \mathbb{R}^d\right)$, we have
	\begin{equation}\label{eq:tangent_lec_C}
\left \|\nabla  X^{x_0, \P_{\xi}}\right \|_{\mathcal S^2}^2  \leq C,
\end{equation}
and
	\begin{equation}\label{eq:tangent_continuity_est}
\left \|\nabla  X^{x_0, \P_{\xi}}-\nabla  X^{x_0', \P_{\xi'}} \right\|_{\mathcal S^2}^2 \leq C\left(\left|x_0-x_0^{\prime}\right|^2+\mathcal W_2(\P_{\xi},\P_{\xi'})^2\right),
\end{equation}
as proved in \cite[Lemma 4.1]{buckdahn_li_peng_rainer}. Hence,  the limit  in \eqref{limit} is uniform in $x_0,\xi$ and, in particular, there exists $C>0$ such that, 
	\begin{equation}\label{eq:differentiability_x0_estimate}
\left \| X^{ x_0+h, \P_{\xi}}-X^{x_0, \P_{\xi}}-\nabla  X^{x_0, \P_{\xi}}  h\right \|_{\mathcal S^2}\leq C |h|^2, \ \ \ \   \xi \in L^2(\mathcal F_0; \mathbb R^{d }),  \  x_0, h \in \mathbb R^d.
\end{equation}
As observed in \cite[Remark 4.1]{buckdahn_li_peng_rainer}, the process $\nabla  X^{\xi, \P_{\xi}}:=\left.\nabla  X^{x_0, \P_{\xi}}\right|_{x_0=\xi}$ is the unique solution in $\mathcal{S}^2\left([0, T] ; \mathbb{R}^{d \times d}\right)$ to the SDE 
\begin{equation}\label{eq:partial_x0_X_t_X0_X0}
\begin{aligned}
\nabla  X_t^{ \xi, \P_{\xi}}= & I_{d}+\int_0^t \partial_{x} b\left(X_r^{\xi}, \P_{X_r^{\xi}}\right) \nabla X_r^{\xi, \P_{\xi}} d Z_r.
\end{aligned}
\end{equation}

\subsubsection{Derivative of $X_t^{ x_0, \P_{\xi}}$ with respect to $\xi$ }\label{subsec:YtX0_eta}

Let us  consider a copy of  $(\Omega, \mathcal{F}, \mathbb{F},  \P)$, which we denote by $(  \tilde{\Omega}, \tilde{\mathcal{F}}, \tilde{\mathbb{F}},  \tilde{\P})$. In this setting, given any random variable $X$ defined on $(  \Omega, \mathcal{F}, \mathbb{F},  \P)$, we may consider a random variable $\tilde X$, which is identical to $X$, but it is defined on $(  \tilde{\Omega}, \tilde{\mathcal{F}}, \tilde{\mathbb{F}},  \tilde{\P})$; hence, the expectation $\tilde\E[\tilde X]:=\int_{\tilde \Omega} \tilde X d\tilde P$ will apply only to $\tilde X$.

For $\xi, \eta \in L^2(\mathcal F_0;\mathbb R^d)$, consider the family of  mean-field SDEs with values in $\mathbb R^d$
\begin{small}
\begin{equation}\label{eq:Y_t_X0_eta}
\begin{aligned} & Y_t^{\xi}(\eta)= \int_0^t \left( \partial_{x}b \left(X_r^{ \xi }, \P_{X_r^{\xi}}\right) Y_r^{\xi}(\eta)  +  \tilde{\mathbb {E}}\left[\partial_{\tilde x}  \delta_{\mu} b \left(X_r^{\xi, \P_{\xi}}, \P_{X_r^{\xi}}, \tilde{X}_r^{ \tilde \xi}\right) \left(\nabla  \tilde{X}_r^{ \tilde \xi,\P_{\xi}} \tilde{\eta}+\tilde{Y}_r^{ \tilde \xi}(\tilde{\eta})\right)\right] \right) d Z_r,
\end{aligned}
\end{equation}
\end{small}
where $( \tilde \xi, \tilde \eta, \tilde {B},\tilde X^{\tilde \xi},\tilde{Y}^{ \tilde \xi}(\tilde{\eta}))$ is a copy of $(\xi, \eta, B,X^{\xi},Y^{ \xi}(\eta))$ defined on the probability space $(  \tilde{\Omega}, \tilde{\mathcal{F}}, \tilde{\mathbb{F}},  \tilde{\P})$. In addition, given a copy $\tilde{Y}_r^{ \tilde \xi}(\tilde{\eta})$ of a solution  $Y_r^{ \xi}(\eta)$ to \eqref{eq:Y_t_X0_eta}, we also consider the standard SDE for all $x_0 \in \mathbb R^d$:
\begin{small}
\begin{equation*}
\begin{aligned} 
&Y_t^{x_0, \xi}(\eta)= \int_0^t\left ( \partial_{x} b \left(X_r^{ x_0,\xi }, \P_{X_r^{\xi}}\right) Y_r^{x_0, \xi}(\eta)   +  \tilde{\mathbb {E}}\left[\partial_{\tilde x} \delta_{\mu} b \left(X_r^{x_0, \P_{\xi}}, \P_{X_r^{\xi}}, \tilde{X}_r^{ \tilde \xi}\right) \left(\nabla \tilde{X}_r^{ \tilde \xi,\P_{\xi}} \tilde{\eta}+\tilde{Y}_r^{ \tilde \xi}(\tilde{\eta})\right)\right] \right) d Z_r ,
\end{aligned}
\end{equation*}
\end{small}
As in \cite[Eqs.\,(4.15)--(4.16)]{buckdahn_li_peng_rainer}, due to the boundedness of the derivatives of $b,\sigma$ (see Assumption \ref{ass:derivatives_coeff}), the system of  equations above admits a unique solution $\left(Y^{\xi}(\eta), Y^{x_0,\xi}(\eta) \right) \in \mathcal{S}^2\left([0, T] ; \mathbb{R}^{ d} \times \mathbb{R}^{d} \right)$. By uniqueness of solutions to \eqref{eq:Y_t_X0_eta}, we have
\begin{equation}\label{eq:Y_X0,X0=Y_X0}
Y_t^{\xi, \xi}(\eta):=Y_t^{x_0, \xi}(\eta)\mid_{x_0=\xi} = Y_t^{ \xi}(\eta).
\end{equation}
Notice that $Y_t^{\xi}(\eta), Y_t^{x_0,\xi}(\eta)$ are linear in $\eta$ (i.e. $\alpha_1 Y_t^{\xi}(\eta)+\alpha_2Y_t^{\xi}(\eta')$ satisfies \eqref{eq:Y_t_X0_eta} for $\alpha_1\eta+\alpha_2 \eta'$ and then, thanks to uniqueness of solutions we have $\alpha_1Y_t^{\xi}(\eta)+\alpha_2Y_t^{\xi}(\eta')=Y_t^{x_0,\xi}(\alpha_1\eta+\alpha_2\eta')$); hence we will write $Y^{\xi}_t \eta$ and  $Y^{x_0,\xi}_t\eta$. Moreover, as in \cite[Eq. (4.17)]{buckdahn_li_peng_rainer}, there exists $C>0$ such that for each $x_0 \in \mathbb R^d$ and $ \xi, \eta \in L^2(\Omega;\mathbb R^d)$, one has
$$
\|Y_t^{x_0, \xi}\eta\|_{L^2} \leq \left \|Y^{x_0, \xi}\eta \right\|_{\mathcal S^2} \leq C \|\eta\|_{L^2},
$$
for every $ t \in [0,T]$; this implies
$
Y_t^{x_0, \xi}\in \mathcal L\left(L^2(\mathcal F_0;\mathbb R^d),L^2 (\mathcal F_t;\mathbb R^d)\right)$, for every $t \in[0, T] .$

Now, let 
 $x_0,y \in \mathbb R^d, \xi \in L^2(\mathcal F_0;\mathbb R^d)$ and consider also the family of mean-field SDEs with values in $\mathbb R^{d \times d}$
\begin{small}
$$
\begin{aligned}
& U_t^{\xi}(y)= \int_0^t \left(\partial_{x} b \left(X_r^{\xi}, \P_{X_r^{\xi}}\right) U_r^{\xi}(y)  + \tilde{\E}\left[\partial_{\tilde x}  \delta_{\mu}  b \left(X_r^{\xi}, \P_{X_r^{\xi}}, \tilde{X}_r^{y, \P_{\xi}}\right) \left(\nabla  \tilde{X}_r^{y, \P_{\xi}}+ \tilde{U}_r^{ \tilde \xi}(y) \right )\right] \right) d Z_r,
\end{aligned}
$$
\end{small}
where $\left(\tilde \xi, \tilde{B}, \tilde X^{x_0,\P_{\xi}}, \tilde{U}^{ \tilde \xi}(y)\right)$ is a  copy of  $\left(\xi,B,X^{x_0,\P_{\xi}},U^{ \xi}(y), \right)$ defined on the probability space $(  \tilde{\Omega}, \tilde{\mathcal{F}}, (\tilde{\mathcal{F}}_t)_{t \in [0,T]},  \tilde{\P})$.
Given a copy  $ \tilde U_t^{\tilde \xi}(y)$ of a solution $ U_t^{\xi}(y)$  to such equation, consider also the standard SDE
\begin{small}
$$
\begin{aligned}
& U_t^{x_0, \xi}(y)=\int_0^t \left(\partial_{x} b \left(X_r^{ x_0, \P_{\xi}}, \P_{X_r^{\xi}}\right) U_r^{x_0, \xi}(y)   +  \tilde{\E}\left[  \partial_{\tilde x}  \delta_{\mu}  b\left(X_r^{ x_0, \P_{\xi}}, \P_{X_r^{\xi}}, \tilde{X}_r^{y, \P_{\xi}}\right)\left( \nabla  \tilde{X}_r^{y, \P_{\xi}} + \tilde{U}_r^{ \tilde \xi}(y)\right) \right] \right)d Z_r,
\end{aligned}
$$
\end{small}
\begin{flushleft}
As in \cite[Eqs.\,(4.48)--(4.49)]{buckdahn_li_peng_rainer}, the system of equations above admits a unique solution $\left(U^{\xi}_{\cdot}(\eta), U^{x_0,\xi}_{\cdot}(y) \right) \in \mathcal{S}^2\left([0, T] ; \mathbb{R}^{d \times d } \times \mathbb{R}^{d  \times d} \right)$ and
we have again
\end{flushleft}
\begin{equation}\label{eq:U_X0,X0=U_X0}
U_t^{ \xi,\xi}(y):=\left.U_t^{ x_0,\xi}(y)\right|_{x_0=\xi}=U_t^{ \xi}(y).
\end{equation}
By \cite[Lemma 4.3]{buckdahn_li_peng_rainer}, under Assumption \ref{ass:derivatives_coeff},
 there exists $C>0$ such that for every $t\in [0,T],x_0,x_0' \in \mathbb R^d,$ $\xi,\xi' \in L^2(\mathcal F_0;\mathbb R^d)$,
\begin{equation}\label{eq:est_U_t^x_0,X0}
\left \|U^{x_0, \xi}(y)\right\|_{\mathcal S^2} \leq C, \quad \left \| U^{x_0, \xi}(y)-U^{ x_0^{\prime}, \xi^{\prime}}\left(y^{\prime}\right)\right \|_{\mathcal S^2} \leq C\left(\left|x_0-x_0^{\prime}\right|+\left|y-y^{\prime}\right|+\mathcal W_2\left(\P_{\xi}, \P_{\xi^{\prime}}\right)\right) .
\end{equation}
Due to the result above,  we can define without ambiguity 
$$U_t^{x_0, \P_{\xi}}(y):=U_t^{x_0, \xi}(y).$$
We now have all ingredients to state  the Fr\'echet differentiability of $ X_t^{ x_0, \P_{\xi}} $ with respect to $\xi.$
\begin{lemma}\label{lemma:derivative_X_t_x0_X0_wrt_X0}
Let Assumption  \ref{ass:derivatives_coeff} hold. 
\\
{\rm (i)} For all $(t,x_0) \!\in\! [0,T]\times\mathbb R^d$, the map $\xi\!\in\! L^2(\mathcal F_0; \mathbb R^{d }) \longmapsto X_t^{ x_0, \P_{\xi}}\!\in\! L^2(\mathcal F_t; \mathbb R^{d })$
is continuously Fr\'echet differentiable; i.e.   there exists a continuous map
$ \xi\mapsto \partial_{\xi} X_t^{x_0, \P_{\xi}} \in \mathcal L \left( L^2(\mathcal F_0;\mathbb R^{d}); L^2(\mathcal F_t;\mathbb R^{d })\right),$ s.t. 
$$
\lim_{\|\eta\|_{L^2}  \rightarrow 0 } \frac 1{\|\eta\|_{L^2} }   \left  \|  X_t^{ x_0, \P_{\xi+\eta}}-X_t^{x_0, \P_{\xi}}-\partial_{\xi} X_{t}^{x_0, \P_{\xi}}  \eta \right \| _{L^2} =0, \ \ \ \  \xi \in L^2(\mathcal F_0;\mathbb R^d).
$$
{\rm (ii)} The Fréchet derivative $\partial_{\xi} X_{t}^{x_0, \P_{\xi}}  $ satisfies
\begin{equation}\label{eq:partial_X0_X_t_x_0,P_X0=Y=EU}
\partial_{\xi} X_{t}^{x_0, \P_{\xi}}  \eta =Y_t^{x_0, \xi}\eta=\tilde{\E}\left[U_t^{x_0, \P_{\xi}}( \tilde{\xi})  \tilde{\eta}\right] \quad  t \in[0,T], \  \eta \in L^2(\mathcal F_0;\mathbb R^d).
\end{equation}
{\rm (iii)} There exists $C>0$ such that
\begin{align*}
&\left  \|\partial_{\xi} X_t^{x_0, \P_{\xi}} \right \|_{\mathcal L} \leq C,\ \ \ \  \ \ \  t\in [0,T], \  x_0 \in \mathbb R^d, \   \xi \in L^2(\mathcal F_0;\mathbb R^d).
\end{align*}
{\rm (iv)}  There exists $C>0$ such that for every $t\in [0,T], \  x_0,x_0' \in \mathbb R^d,  \  \xi,\xi' \in L^2(\mathcal F_0;\mathbb R^d)$,
\begin{align*}
\left  \|\partial_{\xi} X_t^{x_0, \P_{\xi}}-\partial_{\xi} X_t^{x_0', \P_{\xi'}}   \right \|_{\mathcal L} & \leq C \left( |x_0-x_0'|+\mathcal W_2\left(\P_{\xi},\P_{\xi'}\right)\right).
\end{align*}
In particular, it follows that the limit in (i) is uniform in $t, x_0,\xi$ and there exists $C>0$ such that, for every  $x_0 \in \mathbb R^d$, $t \in [0,T]$, and $\xi \in L^2(\mathcal F_0;\mathbb R^d)$,
$$
 \left  \|  X_t^{ x_0, \P_{\xi+\eta}}-X_t^{x_0, \P_{\xi}}-\partial_{\xi} X_{t}^{x_0, \P_{\xi}}  \eta \right \| _{L^2} \leq C \|\eta\|_{L^2}^2 .
$$
\end{lemma}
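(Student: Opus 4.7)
The plan is to follow the strategy of \cite{buckdahn_li_peng_rainer}, taking $Y_t^{x_0,\xi}\eta$ as the candidate Fréchet derivative and organizing the argument into three main steps.

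First, I would prove identity (ii) by showing that $\tilde\E[U_t^{x_0,\P_{\xi}}(\tilde\xi)\tilde\eta]$ solves the same linear SDE as $Y_t^{x_0,\xi}\eta$. Integrating the defining equation for $U^{x_0,\P_{\xi}}(y)$ against $\tilde\eta$ with $y=\tilde\xi$, and commuting $\tilde\E$ with the stochastic integral (legitimate because $(\tilde\xi,\tilde\eta)$ is independent of $B$), produces the SDE \eqref{eq:Y_t_X0_eta} with zero initial value; uniqueness in $\mathcal S^2$ gives the claim. The bound \eqref{eq:est_U_t^x_0,X0} together with Cauchy--Schwarz then yields the operator-norm estimate (iii).

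Second, for (i) I would introduce the remainder $R_t^{\eta}:=X_t^{x_0,\P_{\xi+\eta}}-X_t^{x_0,\P_{\xi}}-Y_t^{x_0,\xi}\eta$ and derive its SDE by Taylor expanding $b$ in both arguments. The state-expansion contributes $\partial_x b(X_s^{x_0,\P_{\xi}},\P_{X_s^{\xi}})R_s^{\eta}$ plus a quadratic remainder, while the measure expansion, through the linear functional derivative followed by the $L$-derivative, contributes $\tilde\E\bigl[\partial_{\tilde x}\delta_{\mu}b(X_s^{x_0,\P_{\xi}},\P_{X_s^{\xi}},\tilde X_s^{\tilde\xi})(\tilde X_s^{\tilde\xi+\tilde\eta}-\tilde X_s^{\tilde\xi})\bigr]$ up to $O(\|\eta\|_{L^2}^2)$ corrections. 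The central trick of \cite{buckdahn_li_peng_rainer} is then the decomposition
\[
\tilde X_s^{\tilde\xi+\tilde\eta}-\tilde X_s^{\tilde\xi}
=\bigl[\tilde X_s^{\tilde\xi+\tilde\eta,\P_{\xi+\eta}}-\tilde X_s^{\tilde\xi,\P_{\xi+\eta}}\bigr]
+\bigl[\tilde X_s^{\tilde\xi,\P_{\xi+\eta}}-\tilde X_s^{\tilde\xi,\P_{\xi}}\bigr].
\]
The first bracket equals $\nabla\tilde X_s^{\tilde\xi,\P_{\xi+\eta}}\tilde\eta$ up to $O(|\tilde\eta|^2)$ by \eqref{eq:differentiability_x0_estimate}, and \eqref{eq:tangent_continuity_est} lets me replace $\P_{\xi+\eta}$ by $\P_{\xi}$ at the price of $O(\|\eta\|_{L^2})$. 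The second bracket is $\tilde Y_s^{\tilde\xi}\tilde\eta+\tilde R_s^{\tilde\eta}$ by the very definition of $R^{\eta}$. After substitution, the SDE for $R^{\eta}$ becomes a closed linear equation in $R^{\eta}$ coupled to the $\tilde\E$-average of its independent copy $\tilde R^{\tilde\eta}$, with inhomogeneous term of order $\|\eta\|_{L^2}^2$ in $\mathcal S^2$. A coupled Grönwall estimate on $\|R^{\eta}\|_{\mathcal S^2}^2$ then produces $\|R^{\eta}\|_{\mathcal S^2}\le C\|\eta\|_{L^2}^2$, which delivers both (i) and the uniform quadratic remainder bound stated at the end of the lemma.

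Third, for (iv) I would subtract the SDEs satisfied by $Y^{x_0,\xi}\eta$ and $Y^{x_0',\xi'}\eta$ and apply Grönwall, exploiting the Lipschitz bounds on $\partial_x b$ and $\partial_{\tilde x}\delta_{\mu}b$ from Assumption \ref{ass:derivatives_coeff}, the continuity estimate \eqref{eq:tangent_continuity_est} on $\nabla X$, and the Lipschitz dependence of $X^{x_0,\P_{\xi}}$ on $(x_0,\mathcal W_2(\P_\xi,\P_{\xi'}))$. The main obstacle I anticipate is the coupling between $R^{\eta}$ and its independent copy $\tilde R^{\tilde\eta}$ in Step 2: the one-shot Grönwall lemma must be upgraded to a coupled version, and checking that the inhomogeneous term genuinely is $O(\|\eta\|_{L^2}^2)$ in $\mathcal S^2$ requires the simultaneous use of \eqref{eq:differentiability_x0_estimate}, \eqref{eq:tangent_continuity_est}, and the quadratic Taylor remainders. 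This is precisely where the representation (ii) via $U^{x_0,\P_{\xi}}$ pays off, since it decouples the dependence on $\tilde\xi$ from that on $\tilde\eta$ and exhibits $\partial_\xi X_t^{x_0,\P_\xi}$ as a $\tilde\E$-average of a bounded operator-valued process.
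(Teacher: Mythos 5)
Your proposal is correct and follows essentially the same route as the paper, which for parts (i)--(iii) simply cites Proposition 4.2 of Buckdahn--Li--Peng--Rainer and for (iv) observes that the $x_0=x_0'$ case is contained in that proposition's proof (via the bound \eqref{eq:est_U_t^x_0,X0}) and that the calculation extends to $x_0\neq x_0'$. What you have written out --- the identification $Y^{x_0,\xi}\eta=\tilde\E[U^{x_0,\P_\xi}(\tilde\xi)\tilde\eta]$ by uniqueness, the decomposition of $\tilde X_s^{\tilde\xi+\tilde\eta}-\tilde X_s^{\tilde\xi}$ separating the frozen-law and the law-perturbation contributions, and the coupled Gr\"onwall estimate for the remainder, all under the averaging $\tilde\E$ that turns the pointwise $O(|\tilde\eta|^2)$ into $O(\|\eta\|_{L^2}^2)$ --- is precisely the content of the BLPR argument to which the paper delegates.
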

\begin{proof}
 (i), (ii), and (iii) follows directly from \cite[Proposition 4.2]{buckdahn_li_peng_rainer}. As for   (iv), 
 notice that the estimate in  (iv) with $x_0=x_0'$ has been shown in the proof  of \cite[Proposition 4.2]{buckdahn_li_peng_rainer}, using \ref{eq:est_U_t^x_0,X0}. Such calculation can be extended to prove (iv) for general $x_0 \neq  x_0'$.  
\end{proof}
With usual notations, we have
\begin{small}
\begin{equation}\label{eq:partial_X0_X_X:0=Y_X0}
\begin{aligned}
\partial_{\xi} X_t^{\xi, \P_{\xi}}\eta &:=\partial_{\xi} X_t^{x_0, \P_{\xi}}  \Big |_{x_0=\xi} \eta
=\tilde{\E}\left[U_t^{ x_0,\P_{\xi}}( \tilde \xi)\tilde \eta  \right]  \Big |_{x_0=\xi} =\tilde{\E}\left[U_t^{ \xi}( \tilde \xi) \tilde \eta \right].
 \end{aligned}
\end{equation}
\end{small}

\begin{remark}\label{rem:conditional_expectation_wrt_eta_X0}
As for Remark \ref{rem:conditional_expectation_wrt_eta_X0_FIRST}, 
 for every $x_0 ,h\in \mathbb R^d$, we have
\begin{equation}\label{eq_conditional_expectation_wrt_eta_X0}
\begin{aligned}
\E\left[   \partial_{\xi} X_t^{\xi, \P_{\xi}} \eta \,\big |( \xi,\eta)=(x_0,h)\right]=\E\left[ \partial_{\xi} X_t^{x_0, \delta_{x_0}}  h\right]=\E\left[ Y_t^{x_0} h\right]=\E\left[ \tilde E[U_t^{x_0}(x_0)h] \right].
\end{aligned}
\end{equation}
\end{remark}

In view of  the equality $X_t^{\xi}=X_t^{\xi, \P_{\xi}}$, we warn the reader not to confuse $\partial_{\xi} X_t^{\xi, \P_{\xi}}$, i.e. the (partial) derivative with respect to $\xi$  of $X_t^{x_0,\P_{\xi}}$ evaluated at $x_0=\xi$, with the (total) derivative of $X_t^{\xi}=X_t^{\xi, \P_{\xi}}$ with respect to $\xi$, which we will denote by $D_{\xi}X_t^{\xi}$.

\subsection{Derivative of $X_t^{\xi}$ with respect to $\xi$ }\label{sec:derivativeBuck}
Recall \eqref{aas} and consider the map
\begin{equation}\label{pooi}
\xi\in L^2(\mathcal F_0;\mathbb R^{d})\longmapsto X_t^{\xi}=X_t^{\xi,\P_{\xi}}\in L^2(\mathcal F_t;\mathbb R^{d }).
\end{equation}
 Formally, computing the derivative of this map,  
it is natural to expect (see also \cite[Eq.\,(4.8),\,p. 840]{buckdahn_li_peng_rainer}) that it is the operator $D_{\xi} X_t^{\xi} \in \mathcal L\left (L^2(\mathcal F_0;\mathbb R^d); L^2(\mathcal F_t;\mathbb R^d)\right)$ given by
\begin{equation}\label{GD}
\left([D_{\xi}X_t^{\xi}]\eta \right) (\omega):=\nabla  X_t^{\xi, \P_{\xi}}(\omega)\eta(\omega)  +\left[ \partial_{\xi} X_{t}^{\xi, \P_{\xi}}  \eta \right] (\omega)  , \ \ \  \eta \in L^2(\mathcal F_0;\mathbb R^d).
\end{equation}
 In this section, we will show that the operator $D_{\xi}X_t^{\xi}$ defined above is indeed the Gateaux derivative of \eqref{pooi}.
Let $\eta \in L^2(\mathcal F_0;\mathbb R^d)$. By  \eqref{eq:partial_x0_X_t_X0_X0} and  \eqref{eq:Y_t_X0_eta},  we have,  for every $\xi \in L^2(\mathcal F_0;\mathbb R^d)$,
\begin{align}\label{deff}
D_{\xi}X_t^{\xi}\eta=\ & \eta+\int_0^t  \left( \partial_{x}b  (X_s^{\xi},\P_{X_s^{\xi}}) D_{\xi}X_s^{\xi}\eta + \tilde{ \E}\left[ \partial_{\tilde x} \delta_{\mu} b (X_s^{\xi},\P_{X_s^{\xi}}, \tilde  X_s^{\tilde \xi}) D_{\xi}\tilde X_s^{\tilde \xi} \tilde \eta \right]\right) dZ_s,
\end{align}
where $\tilde X_s^{\tilde  \xi}$ is the solution to \eqref{eq:sde}  on $(\tilde \Omega, \tilde{\mathcal{F}}, \tilde {\mathcal{F}}_t, \tilde  B_t, \tilde {\P})_{t \in [0,T]}$  with initial condition $\tilde \xi$ such that $\tilde  {\P}_{\tilde \xi}=\mu_0$. Moreover,  due to  boundedness of the derivatives of $b$ (cf. Assumption \ref{ass:derivatives_coeff}), we have that $D_{\xi}X_t^{\xi}\eta$ is the unique solution to the equation above. \\\\


\begin{proof}[Proof of Theorem \ref{th:derivative_XtX0_wrt_X0}]

\emph{Step 1}. We start by showing  the estimates of Claim (i) for the operator $D_{\xi}X_t^{\xi}$  defined in  \eqref{deff}. Indeed, we have
\begin{align*}
\left \| D_{\xi} X_{t}^{\xi} \eta \right \|_{L^2} \leq  \left \|\nabla  X_t^{\xi, \P_{\xi}} \eta \right  \|_{L^2}  + \left \|\partial_{\xi} X_{t}^{\xi, \P_{\xi}}  \eta\right \|_{L^2}, \quad  \eta \in L^2(\mathcal F_0;\mathbb R^d).
\end{align*}
For the first term on the right-hand-side we have
\begin{align*}
\left \|\nabla  X_t^{\xi, \P_{\xi}} \eta \right  \|_{L^2}= \left( \E \left [\E \left [ \left |  \nabla  X_t^{\xi, \P_{\xi}} \eta  \right |^2 \bigg | \left( \xi ,\eta \right) \right]  \right] \right)^{1/2}  .
\end{align*}
By the independence of $(\xi,\eta )$ (which are $\mathcal F_0-$measurable) and the increments of $B_t$ (which are independent of $\mathcal F_0$),  and using \eqref{eq:tangent_lec_C},  for all $ x_0,h \in \mathbb R^d$,
\begin{align*}
\E \left [ \left |  \nabla  X_t^{\xi, \P_{\xi}} \eta  \right |^2 \bigg | \left( \xi ,\eta \right)=(x_0,h) \right]  =\E \left [ \left |  \nabla  X_t^{x_0, \delta_{x_0}} h \right |^2 \right] \leq C |h|^2,
\end{align*}
so that $\left \| \nabla  X_t^{\xi, \P_{\xi}} \eta \right \|_{L^2} \leq C \left \| \eta \right \|_{L^2} $.  For the second term on the right-hand-side, we have
\begin{align*}
\left \|\partial_{\xi} X_t^{\xi, \P_{\xi}} \eta \right  \|_{L^2}= \left( \E \left [\E \left [ \left |  \partial_{\xi} X_t^{\xi, \P_{\xi}} \eta  \right |^2 \bigg | \left( \xi ,\eta \right) \right]  \right] \right)^{1/2}  
\end{align*}
By Remark \ref{rem:conditional_expectation_wrt_eta_X0} and  Lemma \ref{lemma:derivative_X_t_x0_X0_wrt_X0}, for all
$x_0,h \in \mathbb R^d$
\begin{align*}
\E \left [ \left |  \partial_{\xi} X_t^{\xi, \P_{\xi}} \eta  \right |^2 \bigg | \left( \xi ,\eta \right)=(x_0,h) \right]  =\E \left [ \left |  \partial_{\xi} X_t^{x_0, \delta_{x_0}} h \right |^2 \right] \leq C |h|^2,
\end{align*}
so that $\left \| \partial_{\xi} X_{t}^{\xi,\P_{\xi}} \eta \right \|_{L^2} \leq C \left \| \eta \right \|_{L^2} $. Hence 
$$\left \| D_{\xi} X_{t}^{\xi} \eta \right \|_{L^2} \leq C \left \| \eta \right \|_{L^2} ,$$ 
so that we have the claim.

Next, we prove the uniform strong continuity estimate. For $ \eta, \xi,\xi'\in L^2(\mathcal F_0;\mathbb R^d)$, we have:
\begin{equation}\label{eq:est_DX0_XtX0-DX0_XtX0prime}
\begin{aligned}
\left \| \left (D_{\xi} X_{t}^{\xi} - D_{\xi} X_{t}^{\xi'} \right) \eta \right \|_{L^2}  
\leq 
E_t^1(\eta)+E_t^2 (\eta),
~\mbox{where}~
& E_t^1(\eta):=
\left \| \left (\nabla X_t^{\xi, \P_{\xi}} - \nabla X_t^{\xi', \P_{\xi'}} \right) \eta  \right \|_{L^2}  
\\
& E_t^2 (\eta):=
\left \| \left ( \partial_{\xi} X_t^{\xi, \P_{\xi}}  - \partial_{\xi} X_t^{\xi', \P_{\xi'}} \right) \eta\right \|_{L^2}.
\end{aligned}
\end{equation}
\begin{itemize}
\item Consider $E_t^1(\eta) $. By the independence of $(\xi,\xi')$ and the increments of $B_t$, it follows from \eqref{eq:tangent_continuity_est} that for all  $ x_0,x_0' \in \mathbb R^d$
\begin{align*}
\E \left [ \left |  \nabla  X_t^{\xi, \P_{\xi}} - \nabla  X_t^{\xi', \P_{\xi'}}  \right |^2 \bigg | \left( \xi,\xi'   \right)=(x_0,x_0') \right] 
&=\E \left [ \left |\nabla  X_t^{x_0, \delta_{x_0}} - \nabla  X_t^{x_0', \delta_{x_0'}}  \right |^2 \right] \\
& \leq C \left(\left|x_0-x_0^{\prime}\right|^2+\mathcal W_2\left(\delta_{x_0},\delta_{x_0'}\right)^2\right)\leq C \left|x_0-x_0^{\prime}\right|^2,
\end{align*}
which implies that 
\begin{equation}\label{eq:partial_x0XtX0-partial_x0XtX0'}
\left \| \nabla  X_t^{\xi, \P_{\xi}} - \nabla  X_t^{\xi', \P_{\xi'}}  \right \|_{L^2} \leq  C\| \xi-\xi' \|_{L^2},
\end{equation}
and therefore for $\eta \in L^\infty(\mathcal F_0;\mathbb R^d)$:
\begin{align}\label{eq:estm_Et1_eta_in_L_infty}
E_t^1 (\eta)\leq \left \| \nabla  X_t^{\xi, \P_{\xi}} - \nabla  X_t^{\xi', \P_{\xi'}}  \right \|_{L^2}  \|\eta\|_{L^\infty }  \leq  C\| \xi-\xi' \|_{L^2} \|\eta\|_{L^\infty } .
\end{align}

Let now $\eta \in L^2(\mathcal F_0;\mathbb R^d).$ By considering $\eta_N := \eta I_{|\eta| \leq N} \in L^{\infty}(\mathcal F_0;\mathbb R^d)$ (so that   $ \|\eta_N - \eta\|_{L^2} \to 0),$ we have
\begin{align*}
E_t^1 (\eta) \leq E_t^1 (\eta-\eta_N) +  E_t^1 (\eta_N).
\end{align*}
Using \eqref{eq:tangent_lec_C}, we have $E_t^1 (\eta-\eta_N) \leq C \|\eta_N - \eta\|_{L^2}.$ Moreover, by  \eqref{eq:estm_Et1_eta_in_L_infty}, we have $E_t^1 (\eta_N ) \leq C \| \xi-\xi' \|_{L^2}  \|\eta_N\|_{L^\infty}.$ Hence
\begin{align*}
E_t^1 (\eta) \leq C \|\eta_N - \eta\|_{L^2} +C \| \xi-\xi' \|_{L^2}  \|\eta_N\|_{L^\infty}.
\end{align*}
Let $\epsilon>0$. Take first $\bar N$ such that $C \|\eta_{\bar N} - \eta\|_{L^2} < \epsilon/2$. Then  there exists $\delta>0$ such that $C\| \xi-\xi' \|_{L^2}  \|\eta_{\bar N}\|_{L^\infty}< \epsilon/2$, for all $ \| \xi-\xi' \|_{L^2}  < \delta.$ 
Hence,  for all $ \eta \in L^2(\mathcal F_0;\mathbb R^d)$ there exists a modulus of continuity $\varpi_\eta$ such that $E_t^1 (\eta) \leq \varpi_\eta( \| \xi-\xi' \|_{L^2} ) $. 
\item Consider $E_t^2(\eta)$. Let $\eta \in L^2 (\mathcal F_0;\mathbb R^d)$. 
By \eqref{eq:partial_X0_X_X:0=Y_X0}, we have
\begin{align*}
E_t^2(\eta)  & = \left \| \tilde{\mathbb E}\left[ \left  (U_t^{ \xi,\P_{\xi}}(\tilde{\xi}) -U_t^{ \xi',\P_{\xi'}}(\tilde{\xi'}) \right ) \tilde{\eta}\right] \right \|_{L^2}  \leq \left \| \left ( \tilde{\mathbb E}\left[ \left  | U_t^{ \xi,\P_{\xi}}(\tilde{\xi}) -U_t^{ \xi',\P_{\xi'}}(\tilde{\xi'}) \right |^2 \right] \right)^{1/2} \right \|_{L^2}   \| \tilde \eta \|_{L^2}\\
&= \left ( \mathbb E \left [ \mathbb E \left [ \tilde{\mathbb E}\left[ \left  | U_t^{ \xi,\P_{\xi}}(\tilde{\xi}) -U_t^{\xi', \P_{\xi'}}(\tilde{\xi'}) \right |^2  \right]   \bigg | \left( \xi,\xi'  \right) \right]    \right ] \right )^{1/2}  \| \eta \|_{L^2} .
\end{align*}
As in  Remark \ref{rem:conditional_expectation_wrt_eta_X0} and using Lemma \ref{lemma:derivative_X_t_x0_X0_wrt_X0},  for all $ x_0,x_0' \in \mathbb R^d$,
\begin{align*}
 \mathbb E \left [  \tilde{\mathbb E}\left[ \left  | U_t^{ \xi,\P_{\xi}}\!(\tilde{\xi}) -U_t^{\xi', \P_{\xi'}}\!(\tilde{\xi'}) \right |^2  \right]   \bigg | \left( \xi,\xi'  \right)\!=\!(x_0,x_0')  \right] 
 =\mathbb E \left [ \tilde{\mathbb E}\left[ \left  | U_t^{ x_0,\delta_{x_0}}(x_0) -U_t^{x_0', \delta_{x_0'}}(x_0') \right |^2  \right]    \right]  \leq C|x_0-x_0'|^2.
\end{align*}
It follows
$E_t^2(\eta) \leq C  \| \xi-\xi' \|_{L^2} \| \eta \|_{L^2}$. Notice that we have actually proved that  
\begin{equation}\label{eq:norm_estimate_DX0_XtX0-DX0_XtX0prime}
\left \|  \partial_{\xi} X_t^{\xi, \P_{\xi}}  - \partial_{\xi} X_t^{\xi', \P_{\xi'}} \right \|_{\mathcal L}\leq C  \| \xi-\xi' \|_{L^2},
\end{equation}
\end{itemize} 
a stronger estimate implying the required claim.\\
\emph{Step 2.}
 We now complete the proof of Claim (i), i.e. that we have, for $D_{\xi} X_{t}^{\xi}$ defined  in \eqref{deff},
\begin{align}\label{eq:limit_directional_derivative}
& I_t (|r|,\eta) :=  \frac 1 {|r|}   \left  \|  X_t^{ \xi+r \eta}-X_t^{\xi}-r D_{\xi} X_{t}^{\xi} \eta \right\|_{L^2} \xrightarrow{r \to 0}0, \quad  \eta \in L^2(\mathcal F_0;\mathbb R^d).
\end{align}

We first provide an estimate  when $\eta \in L^4(\mathcal F_0;\mathbb R^d) $ and  then  use  a density argument.
Given  $r \in \mathbb \R$ and $\eta \in L^4(\mathcal F_0;\mathbb R^d)$, we have
\begin{align*}
& I_t (|r|,\eta) = \frac 1 {|r|}   \left  \|  X_t^{\xi+r\eta, \P_{\xi+r\eta}}-X_t^{\xi,\P_{\xi}}-\left[  r\nabla X_t^{\xi, \P_{\xi}}  \eta + r \partial_{\xi} X_{t}^{\xi, \P_{\xi}} \eta \right]  \right\|_{L^2}\\
&\leq \frac 1 {|r|}    \left  \|  X_t^{\xi+r \eta, \P_{\xi+r\eta}} - X_t^{\xi, \P_{\xi+ r \eta}}-r  \nabla  X_t^{\xi, \P_{\xi}} \eta  \right\|_{L^2} \\
&\quad + \frac 1 {|r|}  \left  \| X_t^{\xi, \P_{\xi+r \eta}}-X_t^{\xi,\P_{\xi}}-r \partial_{\xi} X_{t}^{\xi, \P_{\xi}}  \eta\right\|_{L^2}=: I^1_t (|r|,\eta) +I^2_t (|r|,\eta) .
\end{align*}
\begin{itemize}
\item[--] As for $I_t^1(|r|,\eta)$, we have
\begin{align*}
I_t^1 (|r|,\eta) = \frac 1 {|r|}  \left( \E \left [\E \left [ \left |  X_t^{\xi+r \eta, \P_{\xi+r \eta}} - X_t^{\xi, \P_{\xi+ r\eta}}- r \nabla  X_t^{\xi, \P_{\xi+r \eta}} \eta \right |^2 \bigg | \left( \xi ,\eta \right) \right]  \right] \right)^{1/2} .
\end{align*}
Since  $(\xi,\eta )$  is independent  of  $B_t$ and using Remarks \ref{rem:conditional_expectation_wrt_eta_X0_FIRST}, \ref{rem:conditional_expectation_wrt_eta_X0}, as wells as \eqref{limit},  \eqref{eq:tangent_continuity_est}, we have for every  $ x_0,h \in \mathbb R^d$
\begin{align*}
 & \E  \left [ \left |  X_t^{\xi+r \eta, \P_{\xi+r \eta}} - X_t^{\xi, \P_{\xi+r \eta}}- r \nabla  X_t^{\xi, \P_{\xi}} \eta \right |^2 \bigg | \left( \xi ,\eta \right)=(x_0,h)\right]\\
&=\E \left [ \left |  X_t^{x_0+r h, \P_{x_0+r h }} - X_t^{x_0, \P_{x_0+r h }}- r \nabla  X_t^{x_0, \delta_{x_0}} h\right |^2 \right]\\
&=\E \left [ \left | \int_0^1  \nabla  X_t^{x_0+\theta r h , \P_{x_0+r h}} rh \  d\theta - r \nabla  X_t^{x_0, \delta_{x_0}} h\right |^2 \right]\\
& \leq C \left (|r|^2 |h|^2+\mathcal W_2( \P_{x_0+rh},  \delta_{x_0})^2 \right ) |r|^2 |h|^2  \leq C |r|^4 |h|^4 ,
\end{align*}
for $C>0$ independent of  $x_0, |r|, |h|.$
 Hence $I^1_t (|r|,\eta)  \leq C |r| \left(\E \left[|\eta|^4 \right]\right)^{1/2} = C  |r| \|\eta\|_{L^4}^2$.
 \medskip
\item[--] As for $I^2_t(|r|,\eta)$, we have
$$I^2_t (|r|,\eta)  = \frac 1 {|r|}   \left ( \E \left [  \E \Big[ \left | X_t^{\xi, \P_{\xi+r \eta}}-X_t^{\xi,\P_{\xi}}-r\partial_{\xi} X_{t}^{\xi, \P_{\xi}} \eta \right |^2 \, \big | \, (\xi,\eta) \Big] \right] \right)^{1/2}. $$ 
As before,  using now  Remarks \ref{rem:conditional_expectation_wrt_eta_X0_FIRST}, \ref{rem:conditional_expectation_wrt_eta_X0}, and Lemma \ref{lemma:derivative_X_t_x0_X0_wrt_X0}, we have   for every $x_0,h \in \mathbb R^d$ 
\begin{align*} 
& \E  \left [ \left | X_t^{\xi, \P_{\xi+r \eta}}-X_t^{\xi,\P_{\xi}}-r\partial_{\xi} X_{t}^{\xi, \P_{\xi}} \eta \right |^2 \bigg |  (\xi,\eta)=(x_0 , h) \right] \\
&=\E \left [ \left | X_t^{x_0, \P_{x_0+r h }}-X_t^{x_0,\delta_{x_0}}-r \partial_{\xi} X_{t}^{x_0, \delta_{x_0}} h\right |^2  \right] \leq C |r|^4 |h|^4 , 
 \end{align*}
 where $C$ is independent of $x_0, |r|, |h|.$  Hence, $I^2_t  (|r|,\eta)  \leq C |r| \left(\E \left[|\eta|^4 \right]\right)^{1/2} = C  |r| \|\eta\|_{L^4}^2.$
 \end{itemize}
We therefore conclude that there exists $C>0$ (independent of $\eta$)  such that
\begin{equation}\label{eq:I_leq_C_r_eta_L4}
I_t  (|r|,\eta)  \leq  C  |r| \|\eta\|_{L^4}^2.
\end{equation}

Now, let  $\eta \in L^2(\mathcal F_0;\mathbb R^d) $ and  define $\eta_N := \eta I_{|\eta| \leq N} \in L^{\infty}(\mathcal F_0;\mathbb R^d)$. Clearly,  we have $ \|\eta_N - \eta\|_{L^2} \to 0$ as $N\to\infty$. By \eqref{eq:estimate_E_X_X0_wrt_initial_cond}, the estimate on the operator norm of $D_{\xi}X_t^{\xi}$, and \eqref{eq:I_leq_C_r_eta_L4}, we have
\begin{small}
\begin{align*} 
I_t  (|r|,\eta)  & \leq  \frac 1 {|r|}   \Big  \|  X_t^{\xi+r\eta }-X_t^{\xi}-r D_{\xi} X_{t}^{\xi} \eta - \left \{  X_t^{\xi+r\eta_N}-X_t^{\xi} -r D_{\xi} X_{t}^{\xi} \eta_N  \right \}  \Big \|_{L^2}+I_t  (|r|,\eta_N) \\
& \leq \frac 1 {|r|}   \Big  \|  X_t^{\xi+r\eta} -  X_t^{\xi+r\eta_N}  \Big  \|_{L^2}  +   \Big  \|  D_{\xi} X_{t}^{\xi} ( \eta - \eta_N)   \Big  \|_{L^2}+I_t  (|r|,\eta_N)\\
& \leq 2C \|\eta - \eta_N\|_{L^2}+C  |r| \|\eta_N\|_{L^4}^2.
 \end{align*}
 \end{small}
 Take now $\varepsilon>0$ and choose first $N\geq 1$ such that 
 $2C \|\eta - \eta_N\|_{L^2} < \varepsilon/2
 $
 and then $\delta>0$ such that  $C \delta \|\eta_N\|_{L^4}^2 < \varepsilon/2$.
Then, we get $I_t  (|r|,\eta)<\varepsilon$ for every $|r|<\delta$. By arbitrariness of $\varepsilon$, we have proved \eqref{eq:limit_directional_derivative}.\\
\emph{Step 3.} We now prove  (ii).  Clearly, the adjoint operator $\left (D_{\xi}X_t^{\xi} \right )^* \in \mathcal L  (L^2(\mathcal F_t;\mathbb R^d); L^2(\mathcal F_0;\mathbb R^d) )$ satisfies  
\begin{align*}
&\left (D_{\xi}X_t^{\xi} \right)^*=\left( \partial_{x_0} X_t^{\xi, \P_{\xi}} \right)^* +\left( \partial_{\xi} X_{t}^{\xi, \P_{\xi}}  \right)^*.
\end{align*}
Notice that  $\left ( \nabla X_t^{\xi} \right)^* \in  \mathcal L\left (L^2(\mathcal F_t;\mathbb R^d); L^2(\mathcal F_0;\mathbb R^d)\right)$ satisfies 
\begin{small}
\begin{align*}
\left \langle \left ( \nabla X_t^{\xi} \right)^* \xi_t, \eta \right  \rangle_{L^2}&=\left \langle \xi_t,  \nabla X_t^{\xi}  \eta \right  \rangle_{L^2}= \E\left [\left\langle \xi_t,   \nabla X_t^{\xi}  \eta \right\rangle_{\R^{d}} \right]=\E\left [\left\langle\left(  \nabla X_t^{\xi} \right)^T \xi_t ,   \eta\right\rangle_{\R^{d}} \right] = \left \langle \left ( \nabla X_t^{\xi} \right)^T \xi_t, \eta \right  \rangle_{L^2},
\end{align*}
\end{small}
for every $\xi_t \in L^2(\mathcal F_t;\mathbb R^d), \eta \in L^2(\mathcal F_0;\mathbb R^d)$. Hence, $\left (D_{\xi}X_t^{\xi} \right )^*$ is characterized by 
\begin{equation}\label{eq:D_X0_XtX0*}
\left[ \left (D_{\xi}X_t^{\xi} \right)^*\eta \right](\omega)= \left (\partial_{x_0} X_t^{\xi, \P_{\xi}} \right)^T(\omega) \eta(\omega) +\left[ \left(\partial_{\xi} X_{t}^{\xi, \P_{\xi}} \right)^* \eta\right](\omega) .
\end{equation}

As for the estimate on the operator norm of $\left (D_{\xi} X_{t}^{\xi}  \right)^*$, it directly follows  from the corresponding estimate for  $D_{\xi} X_{t}^{\xi} $.

Finally, we prove the uniform strong continuity estimate of  $\left (D_{\xi} X_{t}^{\xi}  \right)^*$. Indeed, we have
\begin{equation*}
\begin{aligned}
\left \| \left (D_{\xi} X_{t}^{\xi} - D_{\xi} X_{t}^{\xi'} \right)^* \eta \right \|_{L^2}  & \leq \left \| \left ( \nabla X_t^{\xi, \P_{\xi}} -  \nabla X_t^{\xi', \P_{\xi'}} \right)^T \eta  \right \|_{L^2}  \\
& \quad  +\left \| \left ( \partial_{\xi} X_t^{\xi, \P_{\xi}}  - \partial_{\xi} X_t^{\xi', \P_{\xi'}} \right)^* \eta\right \|_{L^2}  =: \tilde E_t^1(\eta)+\tilde E_t^2 (\eta).
\end{aligned}
\end{equation*}
First, by \eqref{eq:norm_estimate_DX0_XtX0-DX0_XtX0prime} we have
\begin{align*}
\tilde E_t^2(\eta)&\leq \left \|\left(  \partial_{\xi} X_t^{\xi, \P_{\xi}}  - \partial_{\xi} X_t^{\xi', \P_{\xi'}} \right)^*\right \|_{\mathcal L
} \| \xi-\xi' \|_{L^2} \leq C  \| \xi-\xi' \|_{L^2}.
\end{align*}
Next, for every  $\eta \in L^\infty((\Omega,\mathcal F_t,\P);\mathbb R^d)$, by \eqref{eq:partial_x0XtX0-partial_x0XtX0'} we have 
\begin{align*}
&\tilde E^1_t(\eta) \leq \left \| \left( \nabla X_t^{\xi'} -  \nabla X_t^{\xi}\right)^T\right \|_{L^2} \left \|  \eta \right \|_{L^\infty} \leq   C\|\xi'-\xi \|_{L^2} \left \|  \eta \right \|_{L^\infty} .
\end{align*}
Hence, for $\eta \in L^2(\mathcal F_t;\mathbb R^d)$, we can proceed as we did with  $E^1_t$ in the previous step to get $\tilde E^1_t(\eta)\leq \varpi_{\eta} (\|\xi'-\xi \|_{L^2})$. The claim follows.
\end{proof}
\begin{remark}\label{rem:comparison_buck}{\rm (i)} We report here an {argument} communicated to us by R. Buckdahn and J. Li, which shows that  in general we may not expect that the map $
L^2(\mathcal F_0;\mathbb R^d) \to L^2(\mathcal F_t;\mathbb R^d)$, $\xi \mapsto X_t^{ \xi} 
$ is Fréchet differentiable. Consider \eqref{eq:sde} with $b^0(x,\mu)=b^0(x)$, $b^1(x,\mu)=b^1(x)$ and $b^0,b^1 \in C^2$ with  bounded and Lipschitz first and second order derivatives.
Fix $t \in [0,T]$ and let $\xi,\eta  \in L^2(\mathcal F_0;\mathbb R^d)$. Then, with usual notations, we have $X_t^{\xi}=X_t^{x}\mid_{x=\xi}$, $X_t^{\xi+\eta}-X_t^{\xi}=\int_0^1 \nabla  X_t^{\xi+\theta \eta} \eta  d\theta$, and
\begin{align*}
X_t^{\xi+\eta}-X_t^{\xi}-  \nabla X_t^{\xi} \eta =\int_0^1 \left [  \nabla  X_t^{\xi+\theta \eta}    - \nabla X_t^{\xi}\right] d\theta \ \eta=\left [\int_0^1\int_0^\theta  \partial_{x_0^2} X_t^{\xi+\lambda \eta}  d\lambda d\theta \ \eta \right]   \eta,
\end{align*} 
where the second order derivative  process  $\partial_{x_0^2} X_t^{\xi}$ was defined in \cite[Section 5]{buckdahn_li_peng_rainer}. Hence, in general we may only expect
\begin{small}
\begin{align}\label{eq:DX0Xt_not_expected_to_be-frechet}
\left \| X_t^{\xi+\eta}-X_t^{\xi}-  \nabla X_t^{\xi} \eta \right \|_{L^2} =\left( \E \left \{ \left | \left [ \int_0^1\int_0^\theta  \partial_{x_0^2} X_t^{\xi+\lambda \eta}  d\lambda d\theta \ \eta \right] \eta \right |^2 \right\} \right)^{1/2} \approx  \left ( \E \left [|\eta|^4 \right] \right)^{1/2},
\end{align} 
\end{small}
and not a behavior of the type $o\left ( \left ( \E \left [|\eta|^2 \right] \right)^{1/2} \right)$ for $ \left ( \E \left [|\eta|^2 \right] \right)^{1/2} \to 0,$ as required by the definition of Fréchet differentiability.
\end{remark}

\section{DRO Sensitivity for a systemic risk model}
\label{sec:example}
We consider an extension of the systemic risk model presented in  \cite[Section 2]{carmona_fouque_sun} to the case of multiplicative noise. Consider a model of interbank borrowing and lending of $N$ banks, where the log-monetary reserve of each bank $i $ given by the following $1$-dimensional SDE 
\begin{align} \d X^i_s &= \; \frac {a} {N} \sum_{j=1}^N (X_t^j-X_t^i) \,\d s + \sigma(X_t^i)\,\d B^i_s,\qquad X^i_t = \xi^i,\end{align} 
where $a >0$ measures the rate of borrowing/lending between bank $i$ and bank $j$, $\sigma \in C^{1,1}_b(\mathbb R)$ is the volatility coefficient of the bank reserve, $B^i$ are standard real-valued i.i.d. Wiener processes, and $\xi^i\sim \mu_0\in \mathcal P_2(\mathbb R)$ are i.i.d random variables.
When $N\to\infty$, the log-monetary reserve of a representative bank is provided by the McKean-Vlasov SDE
\begin{align} 
\d X_t &= \; 
a\big(
\E[X_t]-X_t  \big)  \,\d t + \sigma(X_t)\,\d B_t,\qquad X_0 = \xi\in L^2(\mathcal F_0), \ \P_\xi = \mu_0\in \mathcal P_2(\mathbb R),
\end{align} 
which is covered by Example \ref{ex:example_coeff}, with $$b(x,\mu):=\left (a\bigg(\int_{\mathbb R} y\mu(dy)-x\bigg),\sigma(x) \right ),  \ \ \ \ \partial_x b (x,\mu) = (-a,\partial_x\sigma(x)), \ \ \ \partial_{\tilde x} \partial_{\mu} b (x,\mu,\tilde x) = (a,0).$$ Moreover, we have
\begin{equation}\label{eq:expectation_log_monetary}
    \E \left [X_t^{\xi}\right]=\E[\xi], \quad t \geq 0.
\end{equation}

In order to evaluate systemic risk, according to the criterion in \cite{carmona_fouque_sun}, we evaluate the variance of the log monetary reserve at some time $T>0$, i.e. 
$$\phi(\mu_T)=\mbox{Var}[X_T^\xi]=\E[(X_T^\xi)^2]-(\E[X_T^\xi])^2,$$
where  
$$\phi \colon  \mathcal{P}_2(\mathbb{R}) \rightarrow \mathbb R, \ \  \phi(\mu):=\int_{\mathbb R}x^2\mu(dx)-\left(\int_{\mathbb R}x\mu(dx)\right)^2.$$
Notice that both the functional $\int_{\mathbb R}x^2\mu(dx)$ and  $\left(\int_{\mathbb R}x\mu(dx)\right)^2$  satisfy our  assumptions (see Examples \ref{ex:linear_functionals} and \ref{ex:functions_of_linear_functionals}) and we have  $\partial_x \delta_\mu \phi (\mu,x )=2x-2\int_{\mathbb R} y \mu(dy)$. Hence, with the notations of Section \ref{sec:sensitivity}, we may consider
$$
\Phi(\mu_0 ,\delta) := \sup_{\mu_0' \in B^2_\delta(\mu_0)} \phi \big(\mu_T'\big), \ \ \   \quad    \delta \geq 0,
$$
and apply Theorem \ref{th:derivative_varphi} to evaluate the sensitivity with respect to the initial distribution, measuring  the change in the variance of the log-monetary reserve with respect to the initial distribution. We get 
\begin{align}\label{eq:sensitivity_systemic_risk}
\frac{ \partial {\Phi}}{\partial \delta} (\mu_0,0 ) =2 \  \Bigg\|  \big(D_{\xi}X_T^{\xi} \big)^* \left( X_T^{\xi}-\E \left [X_T^{\xi}\right] \right) \Bigg\|_{L^2}=2 \  \Bigg\|  \big(D_{\xi}X_T^{\xi} \big)^* \left( X_T^{\xi}-\E \left [\xi\right] \right) \Bigg\|_{L^2},
\end{align}
where we have used \eqref{eq:expectation_log_monetary} and where $D_{\xi} X_t^{\xi} \in \mathcal L\left (L^2(\mathcal F_0); L^2(\mathcal F_t)\right)$, $t \in [0,T]$, is provided by Theorem \ref{th:derivative_XtX0_wrt_X0} and  \eqref{deff}. Here,   the latter is characterized as solution to 
\begin{align*}
D_{\xi}X_t^{\xi}\eta&= \eta+\int_0^t  \left(-aD_{\xi}X_s^{\xi}\eta +a  \E\left[  D_{\xi} X_s^{ \xi}  \eta \right]\right) ds+\int_0^t  \partial_x \sigma(X_s^{\xi})D_{\xi}X_s^{\xi}\eta \ dB_s \\
&=\eta+\int_0^t  \left(-aD_{\xi}X_s^{\xi}\eta +a  \E\left[  \eta \right]\right) ds +\int_0^t  \partial_x \sigma(X_s^{\xi})D_{\xi}X_s^{\xi}\eta \ dB_s, & \eta \in L^2(\mathcal F_0),
\end{align*}
where we used that  $\E\left [D_{\xi}X_t^{\xi}\eta \right]=\E[\eta]$.

\begin{flushleft}
\small{\textbf{Acknowledgments.}   The authors are very grateful to R. Buckdahn and J. Li for the content of Remark \ref{rem:comparison_buck}.
In particular, Filippo de Feo is very grateful to R. Buckdahn for valuable discussions concerning the  differentiability  with respect to the initial datum of the solution of the  McKean-Vlasov SDE and for the hospitality at Universite de Bretagne Occidentale, Brest.\medskip\\
\textbf{Funding.} Filippo de Feo and Fausto Gozzi were supported by the Italian Ministry of University and Research (MIUR), in the framework of PRIN projects 2017FKHBA8 001 (The Time-Space Evolution of Economic Activities: Mathematical Models and Empirical Applications) and 20223PNJ8K (Impact of the Human Activities on the Environment and Economic Decision Making in a Heterogeneous Setting: Mathematical Models and Policy Implications). Filippo de Feo acknowledges support from DFG CRC/TRR 388 "Rough
Analysis, Stochastic Dynamics and Related Fields", Project B05 and by  INdAM (Instituto Nazionale di Alta Matematica F. Severi) - GNAMPA (Gruppo Nazionale per l'Analisi Matematica, la Probabilità e le loro Applicazioni).  Salvatore Federico is a member of the GNAMPA project ``Problemi di controllo ottimo stocastico con memoria ed informazione parziale'' funded by INdAM and of the PRIN project 2022BEMMLZ ``Stochastic control and games and the role of information'' - CUP: D53D23005780006 funded by the Italian Ministry of University and Research (MUR).}
\end{flushleft}

\bibliographystyle{amsplain}

\end{document}